\newtheorem{theorem}[equation]{Theorem}
\newtheorem*{theorem*}{Theorem}
\newtheorem{lemma}[equation]{Lemma}
\newtheorem*{lemma*}{Lemma}
\newtheorem{corollary}[equation]{Corollary}
\newtheorem{proposition}[equation]{Proposition}
\newtheorem*{proposition*}{Proposition}
\theoremstyle{definition}
\newtheorem{definition}[equation]{Definition}
\newtheorem*{definition*}{Definition}
\newtheorem{remark}[equation]{Remark}
\newtheorem{example}[equation]{Example}
\newtheorem*{example*}{Example}
\newtheorem*{problem*}{Problem}
\theoremstyle{plain}
\newcommand{\D}{\mathcal D}
\newcommand{\E}{\mathcal E}
\newcommand{\F}{\mathcal F}
\renewcommand{\H}{\mathcal H}
\newcommand{\M}{\mathcal M}
\renewcommand{\O}{\mathcal O}
\renewcommand{\P}{\mathcal P}
\newcommand{\bD}{\mathbf D}
\newcommand{\CC}{\mathbb C}
\newcommand{\HH}{\mathbb H}
\newcommand{\LL}{\mathbb L}
\newcommand{\NN}{\mathbb N}
\newcommand{\PP}{\mathbb P}
\newcommand{\QQ}{\mathbb Q}
\newcommand{\ZZ}{\mathbb Z}
\newcommand{\DD}{\mathbb D}
\newcommand{\sD}{\mathscr D}
\newcommand{\sS}{\mathscr S}
\newcommand{\sT}{\mathscr T}
\newcommand{\xto}{\xrightarrow} 
\DeclareMathOperator{\Hom}{Hom}
\newcommand{\sHom}{\H om} 
\newcommand{\sExt}{\E xt} 
\newcommand{\DB}{\underline{\Omega}} 
\DeclareMathOperator{\DR}{DR}
\DeclareMathOperator{\sing}{sing}
\DeclareMathOperator{\supp}{supp}
\DeclareMathOperator{\Spec}{Spec}
\DeclareMathOperator{\Pic}{Pic}
\DeclareMathOperator{\lcd}{lcd}
\DeclareMathOperator{\codim}{codim}
\DeclareMathOperator{\IC}{IC}
\DeclareMathOperator{\reg}{reg}
\DeclareMathOperator{\gr}{{\rm gr}}
\DeclareMathOperator{\Alb}{Alb}
\DeclareMathOperator{\Perv}{Perv}
\DeclareMathOperator{\red}{red}
\DeclareMathOperator{\Dp}{\prescript{p}{}{D}}
\DeclareMathOperator{\lcdef}{lcdef}
\newcommand{\Kahler}{K\"{a}hler}
\newcommand{\Poincare}{Poincar\'{e}}
\newcommand{\Kollar}{Koll\'ar}
\newcommand{\Kovacs}{Kov\'acs}
\DeclarePairedDelimiter\abs{\lvert}{\rvert}
\newcommand{\quotes}[1]{``#1"}
\let\oldabs\abs
\def\abs{\@ifstar{\oldabs}{\oldabs*}}
\newcommand{\theoremref}[1]{\hyperref[#1]{Theorem~\ref*{#1}}}
\newcommand{\lemmaref}[1]{\hyperref[#1]{Lemma~\ref*{#1}}}
\newcommand{\definitionref}[1]{\hyperref[#1]{Definition~\ref*{#1}}}
\newcommand{\propositionref}[1]{\hyperref[#1]{Proposition~\ref*{#1}}}
\newcommand{\conjectureref}[1]{\hyperref[#1]{Conjecture~\ref*{#1}}}
\newcommand{\corollaryref}[1]{\hyperref[#1]{Corollary~\ref*{#1}}}
\newcommand{\exampleref}[1]{\hyperref[#1]{Example~\ref*{#1}}}
\let\old@caption\caption
\renewcommand*{\caption}[1]{%
	\setcounter{figure}{\value{equation}}%
	\stepcounter{equation}%
	\old@caption{#1}\relax%
}
\newcounter{intro}
\newtheorem{intro-conjecture}[intro]{Conjecture}
\newtheorem{intro-corollary}[intro]{Corollary}
\newtheorem{intro-theorem}[intro]{Theorem}
\newtheorem{intro-proposition}[intro]{Proposition}
\begin{document}

\title{Generic vanishing for singular varieties via Du Bois complexes}
\author[A. D.~Vo]{Anh~Duc~Vo}
\address{Department of Mathematics, Harvard University, 
1 Oxford Street, Cambridge, MA 02138, USA} 
\email{{\tt ducvo@math.harvard.edu}}

\thanks{}
\date{\today}

\subjclass[2020]{}

\begin{abstract}
We prove appropriate generic vanishing theorems for singular varieties, generalizing the well-known generic vanishing theorem by Green and Lazarsfeld in \cite{GL87} and the generic vanishing theorem of Nakano type in \cite{PS_generic_vanishing}. Our theorem explains the counterexample of Hacon and \Kovacs{} in \cite{Hacon-counter_eg_for_GV}. 
\end{abstract}

\maketitle

\makeatletter
\newcommand\@dotsep{4.5}
\def\@tocline#1#2#3#4#5#6#7{\relax
  \ifnum #1>\c@tocdepth 
  \else
    \par \addpenalty\@secpenalty\addvspace{#2}%
    \begingroup \hyphenpenalty\@M
    \@ifempty{#4}{%
      \@tempdima\csname r@tocindent\number#1\endcsname\relax
    }{%
      \@tempdima#4\relax
    }%
    \parindent\z@ \leftskip#3\relax
    \advance\leftskip\@tempdima\relax
    \rightskip\@pnumwidth plus1em \parfillskip-\@pnumwidth
    #5\leavevmode\hskip-\@tempdima #6\relax
    \leaders\hbox{$\m@th
      \mkern \@dotsep mu\hbox{.}\mkern \@dotsep mu$}\hfill
    \hbox to\@pnumwidth{\@tocpagenum{#7}}\par
    \nobreak
    \endgroup
  \fi}
\def\l@section{\@tocline{1}{0pt}{1pc}{}{\bfseries}}
\def\l@subsection{\@tocline{2}{0pt}{25pt}{5pc}{}}
\makeatother


\tableofcontents

\section{Introduction}

Cohomology vanishing statements play an essential role in algebraic geometry, starting with the well-known Kodaira vanishing theorem and extending to more general ones like Kawamata-Viehweg vanishing, \Kollar{} vanishing, or Saito vanishing theorems. All these vanishing theorems depend on positivity assumptions, and the attempt to relax these assumptions leads to the development of generic vanishing theory. One of the most famous results is the pioneering work of Green and Lazarsfeld (see \cite{GL87}*{Theorem 1}), which states that on a smooth projective variety $X$, the locus of topologically trivial line bundles $L$ such that $H^i(X,L) \neq 0$ has codimension at least $\dim a(X)-i$ in $\Pic^0(X)$, where $a:X \to \Alb(X)$ denotes the Albanese map of $X$. One can consider the set of topologically trivial line bundles for which the cohomology in a given degree does not vanish. More precisely, following \cite{PP_Generic_Vanishing}, we denote, for any coherent sheaf $F$ (or more generally $F\in D_{coh}^b(X)$), by 
\[V^i(F)\coloneqq \{L \in \Pic^0(X)\ |\  H^i(X, F\otimes L)\neq 0\}\]
the \textit{i-th cohomological support locus} of $F$, and we say that $F$ is $GV_{-k}$ if 
\[\codim(V^i(F), \Pic^0(X)) \ge i-k\]
for all $i$. In this language, the classical generic vanishing theorem of Green and Lazarsfeld can be stated as:
\begin{equation} \label{classical GV theorem}
    \text{The canonical line bundle } \omega_X \text{ is } GV_{\dim a(X)- \dim X}.
\end{equation} 
This theorem and its variants have found numerous applications, ranging from the study of singularities of theta divisors \cite{EL-sing_of_theta} to the work on birational geometry of irregular varieties \cite{Hacon-Ueno_conj}. Later, by employing the theory of mixed Hodge modules, Popa and Schnell proved generic vanishing results for Hodge theoretic objects on abelian varieties. These led to a generic vanishing theorem of Nakano type \cite{PS_generic_vanishing}*{Theorem 3.2}:
\begin{equation} \label{PS generic Nakano vanishing}
    \text{The bundle of holomorphic $p$-forms } \Omega^p_X \text{ is } GV_{p-\dim X-\delta(a)},
\end{equation}
where $\delta(a)$ is the semismallness defect of the Albanese morphism $a:X\to \Alb(X)$ (see \ref{definition: semismall-defect} for the definition), and led to the study of zeros of holomorphic one forms on smooth complex projective varieties \cite{zeros_of_holo_forms}. 
    
One might ask if generic vanishing theorems, say for the dualizing sheaf $\omega_X$ to begin with, continue to hold for singular varieties. The answer is positive if $X$ has rational singularities, where the situation is essentially identical to that of smooth varieties. On the other hand, Hacon and \Kovacs{} \cite[Theorem 1.3]{Hacon-counter_eg_for_GV} constructed an example in which $X$ is singular and the classical generic vanishing theorem (\ref{classical GV theorem}) fails. Given the proofs of the known generic vanishing theorems, it is natural to expect this failure due to the fact that Hodge theory operates quite differently in the case of singular varieties.  
In this article, inspired by the use of Hodge module theory in \cite{PS_generic_vanishing}, we establish natural generic vanishing statements from the point of view of the Hodge theory of singular varieties, which, in particular, explain that the example in \cite{Hacon-counter_eg_for_GV} still fits well in the general framework. 

Our results are phrased in terms of Du Bois complexes $\DB_X^p$ and intersection Du Bois complexes $I\DB_X^p$ (see Section \ref{section: Du Bois and intersection complex} for the definitions). Given a complex algebraic variety $X$, for each $p$, the \textit{$p$-th Du Bois complex} $\DB_X^p$ is the (shifted) $p$-th associated graded term of the filtered de Rham complex $\DB_X^\bullet$ with respect to the Hodge filtration. Du Bois complexes generalize sheaves of holomorphic $p$-forms $\Omega_X^p$, and play an important role in the study of singularities via Hodge modules. On the other hand, the intersection complex $\IC_X^\bullet$ and its (shifted) associated graded terms $I\DB_X^p$ with respect to the Hodge filtration, which we call \textit{intersection Du Bois complexes}, were introduced to generalize the \Poincare{} duality. In fact, we have: 
\[I\DB_X^p \cong \DD_X(I\DB_X^{d_X-p})[-d_X]\]
where $\DD_X = R\Hom_{\O_X}(-,\omega_X^\bullet)$ is the Grothendieck duality and $d_X$ denotes the dimension of $X$. Due to \cite{Kebekus-Schnell}*{Proposition 8.1}, the intersection Du Bois complexes are related to sheaves of holomorphic forms of a resolution of singularities of $X$ as:
\[\H^0 I\DB_X^p \cong f_* \Omega_{\tilde X}^p\]
where $f: \tilde X \to X$ is a resolution of singularities of $X$ such that $f$ is isomorphic away from $X_{\sing}$ and the (reduced) exceptional locus is simple normal crossing. Moreover, both the top Du Bois complex $\DB_X^{d_X}$ and the top intersection Du Bois complex $I\DB_X^{d_X}$ are isomorphic to $f_* \omega_{\tilde X}$.  
We refer the readers to \cite{Mihnea-Sunggi} for the discussions of Du Bois complexes and intersection Du Bois complexes. To simplify the statements, we denote by $d_X$ the dimension of a variety $X$. Here are the main results:

\medskip

\begin{intro-theorem} \label{theorem: easy GV for DB^p and IC^p}
    Let $a:X\to A$ be a morphism from a projective variety $X$ to an abelian variety $A$. Then
    \begin{enumerate}
        \item $\DB^{d_X}_X \cong I\DB_X^{d_X}$ is $GV_{d_{a(X)}-d_X}$. 
        \item $\DB^{p}_X$ is $GV_{p-d_X -\delta(a)}$ and $I\DB_X^{p}$ is $GV_{p-d_X -\delta_s(a)}$ for any $p\in \NN$.  
    \end{enumerate}
\end{intro-theorem}

Here $\delta(a)$ (resp. $\delta_s(a)$) is the semismallness defect (resp. stratified semismallness defect) of the morphism $a$. They are defined by the formula:
\[\delta(a) \coloneqq \max_{l\in \NN} \{2l-\dim(X) + \dim(A_l)\}\]
where $A_l = \{y\in A | \dim a^{-1}(y)\ge l \}$ and 
\[\delta_s(a) \coloneqq \max_{s\in \sS} \{\delta(a|_{X_s})\}\]
where $X = \bigsqcup_{s\in \sS} X_s$ is the minimal Whitney stratification of $X$. When $\delta(a) =0$ the morphism $a$ is semismall, while $\delta_s(a) = 0$ implies that $a|_{X_s}$ is semismall map for all strata $X_s$. Statement $(1)$ generalizes the classical generic vanishing theorem (\ref{classical GV theorem}), while statement $(2)$ generalizes the generic Nakano vanishing theorem (\ref{PS generic Nakano vanishing}). On the other hand, for singular varieties, the Grothendieck duals $\DD_X(\DB_X^{d_X-k})[-d_X]$ play an important role but are usually different from $\DB^{k}_X$. These objects also give a natural generalization of $\Omega_X^{k}$; in particular, when $k=d_X$, the more appropriate generalization of the dualizing complex $\omega_X$ is $\DD_X(\DB_X^0)$. For instance, we have $\DB_X^{d_X}\cong I\DB_X^{d_X} \cong \omega_X$ if $X$ has rational singularities, while we only need $X$ to be Cohen-Macaulay and have Du Bois singularities to get $\DD_X(\DB^0_X)[-d_X] \cong \omega_X$. It is therefore necessary to have a statement for these objects as well.
    
\begin{intro-theorem} \label{theorem: hard GV for D(DB^p)}
    Let $X$ be an irreducible projective variety with a morphism $a: X \to A$ to an abelian variety. Then 
    \begin{enumerate}
        \item $\DD_X(\DB^0_X)[-d_X]$ is $GV_{d}$ where $d=\min\{d_{a(X)}-d_X, -d_F(a) -\lcdef(X)\}$. 
        \item $\DD_X(\DB_X^{d_X-p})[-d_X]$ is $GV_{p-d_X-\delta_s(a)-\lcdef(X)}$ for any $p\in\NN$.
    \end{enumerate}
\end{intro-theorem}

Here $\lcdef(X)$ is the local cohomological defect of $X$, and $d_F(a)$ is the stratified relative dimension of the morphism $a$ (see Definition \ref{definition: lcdef} and \ref{definition: stratified relative dimension}). The \textit{local cohomological defect} of $X$ is defined as 
\[\lcdef(X) = \lcd(X,Y) - \codim(X,Y)\]
where $X\subseteq Y$ is an embedding in a smooth variety, with local cohomological dimension $\lcd(X,Y)$. It does not depend on the embedding, and there are intersting classes of varieties with $\lcdef(X) = 0$, for instance local complete intersections. The stratifed relative dimension $d_F(a)$ is defined as the maximal relative dimension of the maps $a|_{X_s}$ where $X_s$ is a strata inside the singular locus of $X$. In fact, this invariant explains the failure of the classical generic vanishing theorem for the example in \cite{Hacon-counter_eg_for_GV}. Hacon and \Kovacs{} constructed a generically finite morphism from a Gorenstein log canonical variety $T$ to an abelian variety $A$, which contracted the one-dimensional singular locus of $X$ to a point, and showed that $\DD_X(\DB^0_X)[-d_X] \cong \omega_X$ is not $GV_0$. In this example, we have $d_{a(X)}-d_X = \lcdef(X) = 0$ and $d_F(a) = 1$, and our Theorem \ref{theorem: hard GV for D(DB^p)} indicates that $\omega_X$ should be $GV_{-1}$.

As a consequence of Theorem \ref{theorem: hard GV for D(DB^p)}, we deduce the non-negativity of Euler characteristic.

\begin{intro-proposition} \label{proposition: nonnegativity_Euler_characteristic}
    Let $X$ be normal variety with maximal Albanese dimension such that $d_F(a) = 0$ and $\lcdef(X)=0$, then $\DD_X(\DB_X^0)[-d_X]$ is a $GV$-sheaf and $\chi(\DD_X(\DB_X^0)[-d_X])\ge 0$. 
    
    In particular, if $X$ also has Du Bois singularities, then $X$ is Cohen-Macaulay and $\chi(\omega_X)\ge 0$. 
\end{intro-proposition}

\medskip
\noindent \textbf{Acknowledgement.} I would like to express my sincere gratitude to my advisor \\
Mihnea Popa for his constant support during the preparation
of this paper. I thank Christian Schnell, S\' andor  \Kovacs{}, Sung Gi Park, and Wanchun Shen for valuable conversations, and Bradley Dirks and Benjamin Church for comments on the draft.

\section{Preliminaries}
In this paper, we work over $\CC$. We denote by $D_{coh}^b(X)$ (resp. $D^b_c(X)$) the bounded derived category of coherent sheaves (resp. constructible sheaves over $\CC$) on $X$. The Grothendieck duality of $F\in D_{coh}^b(X)$ is defined as
\[\DD_X(F) \coloneqq R\sHom_{\O_X}(F,\omega_X^\bullet)\]
where $\omega_X^\bullet$ is the dualizing complex of $X$. Note that when $X$ is smooth, we have $\DD_X(F) \cong R\sHom_{\O_X}(F, \omega_X)[d_X]$ which explains the difference of the indices in our results, comparing to the existing literature in the smooth case.

A resolution of singularity $f: \tilde X \to X$ is called a log resolution of $X$ if $f$ is an isomorphism over the regular locus $X_{\reg}$ and $f^{-1}(X_{\sing})_{\red}$ is a simple normal crossing divisor where $X_{\sing}$ is the singular locus of $X$.

\subsection{Mixed Hodge modules}

We only recall some facts about mixed Hodge modules needed in this article. The main references are Saito's papers \cite{Saito-88} and \cite{saito-MHM}.

Given a smooth complex algebraic variety $Y$, we denote by $\sD_Y$ the sheaf of differential operators on $Y$. We refer to \cite{HTT} for a complete treatment of $\sD$-modules. All the $\sD$-modules we consider in this paper are right $\sD$-modules. For any coherent $\sD_Y$-module $\M$, there exists a \textit{good} filtration $F_*\M$ i.e. an increasing, exhaustive, bounded below filtration of coherent $\O_Y$-modules such that 
\[F_l \M . F_k \sD_Y \subseteq F_{k+l}\M\]
and the equality holds for all $l>l_0$ for some $l_0$. One can associate to such a $\sD_X$-module the de Rham complex
\[\DR_Y(\M) = [\M \otimes_{\O_Y} \bigwedge^{d_Y}T_Y \to \M \otimes_{\O_Y} \bigwedge^{d_Y-1} T_Y \to \dots \to \M]\]
supported in degrees from $-d_Y$ to $0$, in which the differential is induced by the (right) $\sD_Y$-module structure $\M \otimes_{\O_Y} T_Y \to M$. A good filtration on $\M$ defines subcomplexes
\[F_k \DR_Y(\M)= [F_{k-d_Y}\M \otimes_{\O_Y} \bigwedge^{d_Y}T_Y \to F_{k-d_Y+1}\M \otimes_{\O_Y} \bigwedge^{d_Y-1} T_Y \to \dots \to F_k\M]\]
and the associated graded complexes
\[\gr^F_k \DR_Y(\M)= [\gr^F_{k-d_Y}\M \otimes_{\O_Y} \bigwedge^{d_Y}T_Y \to \gr^F_{k-d_Y+1}\M \otimes_{\O_Y} \bigwedge^{d_Y-1} T_Y \to \dots \to \gr^F_k\M].\]
Since some of the results in literature use left $\sD$-modules, we recall the equivalence of categories of left and right $\sD$-modules for the convenience of the readers. To a right $\sD_Y$-module $\M$, one can associate a left $\sD_Y$-module 
\[\M^l = \sHom_{\O_Y}(\omega_Y, \M) \cong \omega_Y^{-1} \otimes_{\O_Y} \M,\] 
and to get back to the right $\sD_Y$-module, we simply tensor with $\omega_Y$. Via this correspondence, the induced good filtration is indexed by the convention
\[F_{k}\M^l = \omega_Y^{-1} \otimes_{\O_Y} F_{k-d_Y}\M.\]
Consequently, we have the following formulas for the associated de Rham complex 
\[\DR_Y(\M^l) = [\M^l \to \M^l \otimes _{\O_Y} \Omega^1_Y \to \dots \to \M^l \otimes_{\O_Y} \omega_Y]\]
and its associated graded pieces
\[\gr^F_k\DR_Y(\M^l) = [\gr^F_{k}\M^l \to \gr^F_{k+1}\M^l \otimes _{\O_Y} \Omega^1_Y \to \dots \to \gr^F_{k+d_Y}\M^l \otimes_{\O_Y} \omega_Y].\]

\medskip

Given an embedding of a variety $X$ into a smooth variety $Y$, a mixed Hodge module $M=(\M, F_\bullet \M, \P, \alpha, W)$ on $X$ consists of a regular holonomic $\sD_Y$-module with a good filtration (Hodge filtration) $F_\bullet \M$, a perverse sheaf $\P$ over $\QQ$ on $X^{\text{an}}$ such that $\P\otimes_{\QQ} \CC \cong \DR_Y(\M)$ via $\alpha$, and an increasing weight filtration $W$. We will abuse the notation by also writing $F_\bullet M$ for the Hodge filtration of the underlying $\D_Y$ module of $M$. These data are supposed to satisfy a complicated set of conditions we will not discuss. We denote by $MHM(X)$ the abelian category of mixed Hodge modules on $X$; this category is independent of the embedding of $X$ into smooth varieties. Furthermore, there is a natural faithful and exact functor 
\begin{equation} \label{rat functor}
    \text{rat}: MHM(X) \to \Perv(\underline \QQ_X)
\end{equation}
where $\Perv(\underline \QQ_X)$ is the category of $\QQ$-perverse sheaves on $X^{an}$ with algebraic stratification and $\text{rat}$ takes the underlying rational structure of a mixed Hodge module. The category $MHM(X)$ contains a subcategory $MH(X,w)^p$ of polarizable Hodge modules of pure weight $w$ \cite{Saito-88}*{5.1.6, 5.2.10}. We say $M$ has strict support $Z$ if the underlying perverse sheaf $\text{rat}(M)$ is an intersection complex with support $Z$. These objects form a full subcategory $MH_Z(X, w)^p$ of $MH(X, w)^p$, and we have the decomposition by strict support
\begin{equation}
    MH(X,w)^p = \oplus_{Z\subseteq X} MH_Z(X,w)^p.
\end{equation}
The Tate twist $M(k)$ of a mixed Hodge module $M$ has the same underlying $\D_Y$-module, but the two filtrations are shifted by 
\[F_p M(k) = F_{p-k} M \quad \text{and} \quad W_q M(k) = W_{q+2k} M\]
for all $k \in \ZZ$. 

For a mixed Hodge module $M$, its de Rham complex is the one for the underlying filtered $\D_Y$-modules. It is well-known that the associated graded complex $\gr^F_p \DR_Y(M)$ is a complex of coherent $\O_X$-modules. Moreover, this is well defined for complexes of mixed Hodge modules $M^\bullet$, thereby inducing the graded de Rham functor 
\[\gr^F_p \DR_Y: D^b MHM(X) \to D_{coh}^b(X)\]
from the bounded derived category of mixed Hodge modules on $X$ to the bounded derived category of coherent $\O_X$-modules. This functor is independent of the embedding of $X$ into smooth varieties (see \cite[Proposition 2.33]{saito-MHM}); and because of that reason, we will omit the subscript and write $\gr^F_p \DR$ for the graded de Rham functor. 

One beautiful accomplishment of Saito's theory of mixed Hodge modules is the construction of natural functors $f_*, f_!, f^*, f^!, \otimes, \sHom, \bD_X$ in $D^bMHM(X)$ such that they are compatible with the corresponding functors on the underlying $\QQ$-complexes via $rat: D^bMHM(X) \to D^b \Perv(\underline \QQ_X)$. Moreover, we emphasize the following facts that will be used throughout the paper (see \cite[Section 4]{saito-MHM}). 

\begin{proposition} \label{proposition: base change and commute with dual}
    Given a morphism $f: X\to Y$ between algebraic varieties, there are canonical isomorphisms:
    \[\bD_X \circ \bD_X \cong Id, \quad \bD_Y \circ f_* \cong f_! \circ \bD_X, \quad \bD_X \circ f^* \cong f^! \circ \bD_Y.\]
    For a cartesian square 
    \[\begin{tikzcd}
	{X'} & X \\
	{Y'} & Y
	\arrow["{g'}", from=1-1, to=1-2]
	\arrow["f", from=1-2, to=2-2]
	\arrow["g"', from=2-1, to=2-2]
	\arrow["{f'}"', from=1-1, to=2-1]
    \end{tikzcd}\]
    we have canonical isomorphisms:
    \[g^! \circ f_* \cong f'_* \circ (g')^!, \quad g^* \circ f_! \cong f'_! \circ (g')^*.\]
    Similarly, these are compatible with the natural isomorphisms on the underlying $\QQ$-complexes. 
\end{proposition}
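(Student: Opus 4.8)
The plan is to recall the construction of the six operations on $D^b MHM(\blank)$ following Saito, and to observe that every isomorphism in the statement is either built into the definitions or reduces to the analogous statement for the underlying filtered $\sD$-modules together with the classical statement for $\QQ$-perverse sheaves. First I would set up duality: for $X$ embedded in a smooth $Y$, the functor $\bD_X$ on $D^b MHM(X)$ is defined by applying holonomic duality of filtered $\sD_Y$-modules together with Verdier duality on the rational side, compatibly with $\alpha$. The involutivity $\bD_X \circ \bD_X \cong \mathrm{Id}$ then follows from the corresponding facts --- duality of regular holonomic $\sD_Y$-modules is involutive, Verdier duality on $\Perv(\underline\QQ_X)$ is involutive --- once one checks that the Hodge and weight filtrations are carried back to themselves; this also yields independence of the embedding.

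With $\bD_X$ and the direct image $f_*\colon D^b MHM(X)\to D^b MHM(Y)$ (Saito's functor lifting $Rf_*$ on perverse sheaves) in hand, one simply \emph{defines} $f_! \coloneqq \bD_Y \circ f_* \circ \bD_X$ and $f^! \coloneqq \bD_X \circ f^* \circ \bD_Y$, so that the isomorphisms $\bD_Y \circ f_* \cong f_! \circ \bD_X$ and $\bD_X \circ f^* \cong f^! \circ \bD_Y$ are immediate from involutivity of $\bD$. The genuine content here --- that these definitions produce functors landing in $D^b MHM$, i.e. that $f_*$ of a mixed Hodge module is again a complex of mixed Hodge modules --- is Saito's stability theorem, whose proof factors $f$ through a closed immersion followed by a smooth (or projective) morphism and treats each case separately, the decomposition theorem for polarizable pure Hodge modules being the crucial input in the projective case. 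For the base change isomorphisms $g^! \circ f_* \cong f'_* \circ (g')^!$ and $g^* \circ f_! \cong f'_! \circ (g')^*$ attached to the cartesian square, I would again reduce $f$ (and $g$) to the two basic types --- closed immersion and smooth morphism --- where the statement for filtered $\sD$-modules is classical (flat base change; compatibility of $\sD$-module pushforward along a projection with restriction over a subvariety of the base) and the statement on the rational side is ordinary base change for perverse sheaves; one then checks that the canonical comparison maps are strict for the Hodge filtration. The two displayed identities are exchanged by $\bD$, so it suffices to establish one of them.

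Finally, compatibility with the underlying $\QQ$-complexes is automatic: each of $\bD$, $f_*$, $f_!$, $f^*$, $f^!$ is constructed so that $\mathrm{rat}$ intertwines it with the corresponding topological functor, and all the isomorphisms above are produced at the level of the underlying filtered $\sD$-module and perverse-sheaf data, hence are sent by $\mathrm{rat}$ to the classical isomorphisms in $D^b_c(\blank)$.

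The main obstacle is that essentially all the difficulty is concealed in Saito's theorem that the six operations preserve $D^b MHM$ --- especially stability under $f_*$ for projective $f$, which rests on the decomposition theorem and the structure theory of polarizable pure Hodge modules. Since the present paper uses these formal properties only as standard input, I would cite \cite[Section 4]{saito-MHM} and \cite{Saito-88} for the construction and prove nothing new here; the proposal above is a guide to where each assertion comes from rather than a self-contained argument.
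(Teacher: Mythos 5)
Your proposal takes essentially the same approach as the paper: the paper does not prove this proposition either, but cites it directly as part of Saito's six-functor formalism for $D^b MHM$ (\cite[Section 4]{saito-MHM}), and you correctly identify that all the content lies in Saito's construction and stability theorems and explicitly defer to the same sources. Your outline of where each isomorphism originates in Saito's machinery (duality via filtered $\sD$-module and Verdier duality, $f_!$ and $f^!$ via conjugation by $\bD$, base change by factoring through closed immersions and projections) is a reasonable gloss and consistent with the paper's usage.
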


\begin{remark}
    Note that we have two different duality functors up to now: $\DD_X$ for the bounded derived category of coherent sheaves and $\bD_X$ for the bounded derived category of mixed Hodge modules. 
\end{remark}

\begin{proposition}
    Let $i: Z\to X$ be a closed immersion and $j:U = X \smallsetminus Z \to X$ the immersion of the complement. Then we have the following functorial triangles for any $M^\bullet \in D^bMHM(X)$:
    \[j_! j^* M^\bullet \to M^\bullet \to i_* i^* M^\bullet \xto{+1}\]
    and 
    \[i_* i^! M^\bullet \to M^\bullet \to j_* j^* M^\bullet \xto{+1}\]
    compatible with those for the underlying $\QQ$-complexes. 
\end{proposition}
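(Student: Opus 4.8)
The plan is to obtain both triangles formally from Saito's six-functor formalism on $D^bMHM$, the only non-formal ingredient being that $\text{rat}$ is conservative. Recall that by Saito's construction, together with \propositionref{proposition: base change and commute with dual}, the functors $i^{*},i_{*},i^{!}$ and $j_{!},j^{*},j_{*}$ exist on $D^bMHM$, carry the usual adjunctions $(i^{*},i_{*})$, $(i_{*},i^{!})$, $(j_{!},j^{*})$, $(j^{*},j_{*})$, and are compatible with $\text{rat}$ and with the corresponding operations on constructible complexes. Since $\text{rat}\colon MHM(X)\to\Perv(\underline{\QQ}_{X})$ is faithful and exact, it is conservative on $MHM(X)$, and because it commutes with cohomology it is conservative on $D^bMHM(X)$: an object is zero (resp.\ a morphism is an isomorphism) iff its image under $\text{rat}$ is. Applying $\text{rat}$ therefore transports to $D^bMHM$ the basic identities that hold for perverse sheaves: the units/counits $j^{*}j_{!}\cong\mathrm{id}$, $i^{*}i_{*}\cong\mathrm{id}$, $i^{!}i_{*}\cong\mathrm{id}$, $j^{*}j_{*}\cong\mathrm{id}$ are isomorphisms, and $i^{*}j_{!}=0$, $j^{*}i_{*}=0$, $i^{!}j_{*}=0$ as functors.

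For the first triangle I would take the counit $\varepsilon\colon j_{!}j^{*}M^{\bullet}\to M^{\bullet}$ of $(j_{!},j^{*})$ and complete it to a distinguished triangle $j_{!}j^{*}M^{\bullet}\xto{\varepsilon}M^{\bullet}\to C\xto{+1}$. Applying $j^{*}$ and using a triangle identity for $(j_{!},j^{*})$ together with $j^{*}j_{!}\cong\mathrm{id}$ shows $j^{*}\varepsilon$ is an isomorphism, hence $j^{*}C=0$; applying $i^{*}$ and using $i^{*}j_{!}=0$ gives an isomorphism $i^{*}M^{\bullet}\xto{\sim}i^{*}C$. Next, from $j^{*}C=0$ and $j^{*}i_{*}=0$, the cone of the unit $C\to i_{*}i^{*}C$ has vanishing $j^{*}$, while its $i^{*}$ vanishes by a triangle identity for $(i^{*},i_{*})$ and $i^{*}i_{*}\cong\mathrm{id}$; an object annihilated by both $j^{*}$ and $i^{*}$ has underlying perverse complex of empty support, hence is $0$ by conservativity of $\text{rat}$. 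Thus $C\cong i_{*}i^{*}C\cong i_{*}i^{*}M^{\bullet}$, and applying $\text{rat}$ once more (using faithfulness) identifies $M^{\bullet}\to C$ with the unit $M^{\bullet}\to i_{*}i^{*}M^{\bullet}$; this gives the first triangle, functorially, since its three terms and its two maps are natural transformations.

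For the second triangle I would apply $\bD_{X}$ to the first. By \propositionref{proposition: base change and commute with dual} and $\bD_{X}^{2}\cong\mathrm{id}$ one has $\bD_{X}j_{!}\cong j_{*}\bD_{U}$ and, since $j$ is an open immersion, $\bD_{U}j^{*}\cong j^{*}\bD_{X}$ (the normalization makes $j^{!}\cong j^{*}$), so $\bD_{X}(j_{!}j^{*}M^{\bullet})\cong j_{*}j^{*}(\bD_{X}M^{\bullet})$; similarly $i_{!}\cong i_{*}$ and $\bD_{Z}i^{*}\cong i^{!}\bD_{X}$ give $\bD_{X}(i_{*}i^{*}M^{\bullet})\cong i_{*}i^{!}(\bD_{X}M^{\bullet})$. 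Since $\bD_{X}$ is contravariant and triangle-reversing, it turns the first triangle into $i_{*}i^{!}(\bD_{X}M^{\bullet})\to\bD_{X}M^{\bullet}\to j_{*}j^{*}(\bD_{X}M^{\bullet})\xto{+1}$, and as $\bD_{X}$ is an anti-equivalence the argument may be taken to be an arbitrary $N^{\bullet}\in D^bMHM(X)$; the maps are the counit of $(i_{*},i^{!})$ and the unit of $(j^{*},j_{*})$ because Verdier duality interchanges these adjunctions (again checkable on $\text{rat}$). Alternatively the second triangle can be built directly, mirroring the argument above with the unit $M^{\bullet}\to j_{*}j^{*}M^{\bullet}$ and the relations $j^{*}j_{*}\cong\mathrm{id}$, $i^{!}j_{*}=0$, $i^{!}i_{*}\cong\mathrm{id}$. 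Finally, compatibility with the underlying $\QQ$-complexes is immediate: $\text{rat}$ is triangulated and intertwines the six operations with their units and counits, so it sends each of our two triangles to the corresponding recollement triangle in $\Perv(\underline{\QQ}_{X})$, hence in $D^{b}_{c}(X)$.

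The step I expect to be the real obstacle — the rest being adjunction bookkeeping — is the transfer step: making precise that the support and orthogonality identities ($j^{*}j_{!}\cong\mathrm{id}$, $i^{*}j_{!}=0$, $j^{*}i_{*}=0$, $i^{!}j_{*}=0$, $\ldots$) hold on the nose in $D^bMHM$, using that the relevant natural transformations already live at the Hodge-module level and that $\text{rat}$ is conservative. Once these are available, the formation of the triangles and the identification of their vertices and maps is routine.
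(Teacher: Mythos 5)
The paper does not prove this statement at all: it is recalled as part of Saito's formalism, with the triangles taken from \cite{saito-MHM}*{Section 4} (they appear there, essentially as part of the construction of $j_!$ and $j_*$, in (4.4.1)). So your formal derivation from the adjunctions plus conservativity of $\text{rat}$ is a genuinely different route from the paper's citation. It is a reasonable one, and you correctly locate where the real content sits: everything hinges on the fact that the six functors on $D^bMHM$ come equipped with the adjunctions $(j_!,j^*)$, $(j^*,j_*)$, $(i^*,i_*)$, $(i_*,i^!)$ and that $\text{rat}$ intertwines these adjunctions (units and counits), not merely the functors objectwise. That compatibility is itself a theorem of Saito's and cannot be produced by conservativity alone; granting it, your transfer of $j^*j_!\cong \mathrm{id}$, $i^*j_!=0$, $j^*i_*=0$, $i^!j_*=0$, etc.\ via conservativity of $\text{rat}$ on $D^bMHM(X)$ (which does hold: faithfulness plus exactness on $MHM(X)$ and compatibility with perverse cohomology give it) is fine, as is the duality argument for the second triangle using $\bD_X^2\cong\mathrm{id}$, $\bD\circ f_*\cong f_!\circ\bD$, $\bD\circ f^*\cong f^!\circ\bD$, $j^!\cong j^*$ and $i_!\cong i_*$.

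One step is wrong as written: you invoke faithfulness of $\text{rat}$ on $D^bMHM(X)$ to identify $M^\bullet\to C$ with the unit $M^\bullet\to i_*i^*M^\bullet$. Faithfulness holds only on the abelian category $MHM(X)$, not on its derived category (extensions of Hodge modules with the same underlying $\QQ$-complexes already show that $\text{rat}$ kills nonzero morphisms to shifts), so this appeal is not available. Fortunately the identification is purely formal and does not need $\text{rat}$ at all: once you know $j^*C=0$ and hence that the unit $u_C\colon C\to i_*i^*C$ is an isomorphism (from $i^*i_*\cong\mathrm{id}$ and a triangle identity), naturality of the unit gives $u_C\circ c=i_*i^*(c)\circ u_M$ for $c\colon M^\bullet\to C$, and since $i^*(c)$ is an isomorphism this exhibits $c$ as the unit $u_M$ composed with an isomorphism $i_*i^*M^\bullet\cong C$. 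With that repair, and with the caveat that ``functorial'' here should be read as: the two displayed maps are the natural (co)unit maps and each such pair extends to a distinguished triangle (cones themselves are not functorial), your argument goes through; it is more self-contained than the paper's treatment, at the cost of quietly re-using the adjunction-level compatibilities that Saito's reference already packages together with these very triangles.
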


The proof of the main results relies on an in-depth understanding of the graded de Rham functor $\gr^F_p \DR$. Here, we recall the behaviors of this functor with respect to the natural functors in $ D^bMHM (X)$ in the most general form (see \cite{Saito-88}, \cite{Saito-MHC}, \cite{Sunggi-DB-extension-forms})

\begin{proposition} \label{proposition: grDR and f_* commute}
    Let $f: X\to Y$ be a proper morphism of algebraic varieties. For any $M^\bullet \in D^b MHM(X)$, we have natural isomorphisms 
    \[\gr^F_p \DR ( f_* (M^\bullet)) \cong Rf_* ( \gr^F_p \DR(M^\bullet))\]
    in $D_{coh}^b(Y)$, and 
    \[\DD_X(\gr^F_p DR(M^\bullet)) \cong \gr^F_{-p} DR (\bD_X(M^\bullet))\]
    in $D_{coh}^b(X)$. 
\end{proposition}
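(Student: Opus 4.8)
The plan is to reduce both isomorphisms to statements about filtered $\sD$-modules on a smooth ambient variety and then to assemble them from Saito's constructions. Recall that $\gr^F_p\DR$, $f_*$ and $\bD_X$ are all defined through local closed embeddings of our (possibly singular) varieties into smooth ones and do not depend on the embedding chosen, so it suffices to work with the underlying filtered $\sD$-complexes. Fix a closed embedding $X\hookrightarrow\bar X$ with $\bar X$ smooth, let $(\mathcal M^\bullet,F)$ be the filtered $\sD_{\bar X}$-complex underlying $M^\bullet$, and recall that $\gr^F_p\DR$ is by construction the degree-$p$ component of the associated graded of the filtered de Rham complex, while proper pushforward of mixed Hodge modules restricts on underlying filtered $\sD$-complexes to Saito's pushforward $\int_f$. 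Throughout we may freely work in the filtered derived category, since, as recalled above, $\gr^F_p\DR$ is well defined on $D^bMHM$.

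For the pushforward formula, I would factor the proper morphism $f\colon X\to Y$, which we may take to be projective (reducing to this case via Chow's lemma), as a closed immersion $\iota\colon X\hookrightarrow\PP^N\times\bar Y$ into a smooth variety followed by the projection $q\colon\PP^N\times\bar Y\to\bar Y$, where $Y\hookrightarrow\bar Y$ is a chosen smooth ambient; here one uses that $f_*$, $\gr^F_p\DR$ and $Rf_*$ are local on $Y$ and that the closed immersion $Y\hookrightarrow\bar Y$ is transparent by Kashiwara's equivalence. For the closed immersion $\iota$, the identity $\gr^F_p\DR(\iota_*(-))\cong R\iota_*\,\gr^F_p\DR(-)$ is essentially built into the embedding-independence of $\gr^F_p\DR$, reducing to a local computation in which the extra cotangent directions contribute trivially. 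For the projection $q$, Saito's pushforward $\int_q(\mathcal M^\bullet,F)$ is computed as $Rq_*$ of the relative filtered de Rham complex; forming $\gr^F$ commutes both with the passage to this relative complex and with $Rq_*$ at the level of filtered complexes, and the relative-to-absolute de Rham identity together with the projection formula identify $Rq_*\,\gr^F_p\DR_{\PP^N\times\bar Y}(\mathcal M^\bullet)$ with $\gr^F_p\DR_{\bar Y}$ applied to $\int_q(\mathcal M^\bullet,F)$. Since $\int_f=\int_q\circ\int_\iota$ and $Rf_*=Rq_*\circ R\iota_*$, composing the two cases gives the first isomorphism; functoriality and compatibility with the underlying $\QQ$-complexes are inherited from the corresponding properties of $f_*$ on $D^bMHM$ (see \cite{Saito-88}).

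For the duality formula, work on a single smooth variety $\bar X$ containing $X$, and use that, by \cite{Saito-MHC}, the filtered $\sD_{\bar X}$-complex $(\mathcal N^\bullet,F)$ underlying $\bD_X(M^\bullet)$ is the \emph{filtered dual} $R\sHom_{\sD_{\bar X}}\!\big((\mathcal M^\bullet,F),\,(\sD_{\bar X},F)\otimes_{\O_{\bar X}}\omega_{\bar X}^{-1}\big)[d_{\bar X}]$ with its suitably shifted filtration. The claim then reduces to the identity $\DD_{\bar X}\big(\gr^F_p\DR_{\bar X}(\mathcal M^\bullet)\big)\cong\gr^F_{-p}\DR_{\bar X}(\mathcal N^\bullet)$ of filtered $\sD$-module theory, from which the statement on $X$ follows because $\DD_{\bar X}\circ Ri_*\cong Ri_*\circ\DD_X$ for the closed immersion $i\colon X\hookrightarrow\bar X$. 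To prove that identity, pass to associated graded over $\A\coloneqq\gr^F\sD_{\bar X}=\mathrm{Sym}_{\O_{\bar X}}T_{\bar X}$: then $\bigoplus_p\gr^F_p\DR_{\bar X}(\mathcal M^\bullet)$ is the Koszul complex — on $T^*\bar X$, of the tautological section of $\pi^*T_{\bar X}$ with $\pi\colon T^*\bar X\to\bar X$ — computing $\big(\bigoplus_p\gr^F_p\mathcal M^\bullet\big)\otimes^{\mathbf L}_{\A}\O_{\bar X}$ with its internal grading, and taking $\gr^F$ carries the filtered dual to $R\sHom_{\A}(-,\A)$ up to shift and $\O_{\bar X}$-twist. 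The self-duality of the Koszul complex resolving $\O_{\bar X}$ over $\A$ — a regular sequence of length $d_{\bar X}$ — then supplies exactly the shift by $d_{\bar X}$, the twist by $\omega_{\bar X}$ that promotes $R\sHom_{\O_{\bar X}}(-,\O_{\bar X})$ to $\DD_{\bar X}$, and the grading flip $p\mapsto-p$; assembling these so that the $[d_{\bar X}]$ shifts cancel yields the stated isomorphism.

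The main obstacle is bookkeeping rather than a single hard idea. In part~(1), one must carry out the factorization of $f$ carefully — including the reduction to the projective case via Chow's lemma and the localization on $Y$ — and verify that $\gr^F$ genuinely commutes with formation of the relative de Rham complex and with $Rq_*$ at the level of filtered complexes; in part~(2), the delicate point is to pin down the exact shifts and the $\omega_{\bar X}$-twist in the filtered dual and to match them against Koszul self-duality so that no net shift remains and only the reindexing $p\mapsto-p$ survives. The genuinely deep input is Saito's strictness and decomposition machinery: it is what legitimises working in the filtered derived category and what guarantees that $\int_f$ and the filtered dual preserve the class of filtered $\sD$-complexes on which $\gr^F_p\DR$ is defined. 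Granting that, everything above is formal; and in the applications of this paper, where $X$ is projective and the morphism to the abelian variety is projective, the factorization in part~(1) is available directly, with no need for Chow's lemma.
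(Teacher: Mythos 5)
The paper gives no proof of this proposition: it is recalled as part of Saito's package, with pointers to \cite{Saito-88}, \cite{Saito-MHC} and \cite{Sunggi-DB-extension-forms}, so the only fair comparison is with the arguments in those sources --- and your sketch does follow essentially that route (factor a projective morphism through a closed immersion and a smooth projection, use embedding-independence of $\gr^F_p\DR$ for the immersion and the relative filtered de Rham description of the direct image for the projection; reduce the duality statement to the filtered dual over a smooth ambient variety and conclude by graded/Koszul duality). Two places in your write-up put the weight in the wrong spot, though. In part (1), the statement that $\gr^F$ commutes with $Rq_*$ ``at the level of filtered complexes'' is the easy, formal half; the substantive point is that the filtered complex underlying $f_*M^\bullet$ in $D^bMHM$ really is the filtered direct image of the underlying filtered complex, i.e.\ Saito's strictness theorem for proper pushforward --- you do name strictness as the deep input at the end, but it belongs exactly at that step, not as a global disclaimer. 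Likewise the reduction from proper to projective via Chow's lemma is not automatic: Chow's lemma produces a projective birational modification, and deducing the statement for $f$ from the modified map needs a descent or induction-on-dimension argument (or one simply invokes that Saito constructs $f_*$ for arbitrary morphisms and proves the strictness/commutation for proper ones), so as written that sentence is a gap, albeit a repairable one. In part (2), passing from the filtered dual to $R\sHom_{\gr^F\sD_{\bar X}}(-,\gr^F\sD_{\bar X})$ after taking associated graded requires strictness of the filtered dual, equivalently the Cohen--Macaulay property of $\gr^F\M$ over $\mathrm{Sym}\,T_{\bar X}$, which is again part of Saito's theorem that $\bD$ preserves mixed Hodge modules; Koszul self-duality by itself only accounts for the shift, the $\omega_{\bar X}$-twist and the flip $p\mapsto-p$. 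With those two inputs made explicit where they are used, your argument is the standard proof underlying the citation.
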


\subsection{Decomposition theorem and Torsion-freeness theorem for Hodge modules of pure weight}

Given a projective variety $X$, for any $M^\bullet\in D^b MHM(X)$, we define 
\[p(M^\bullet) \coloneqq \min \{p\  |\  \gr^F_p \DR(M^\bullet) \text{ is not acyclic}\}.\] 
If we fix an embedding of $X$ into a smooth variety $Y$ and $M$ is a complex of mixed Hodge module on $Y$ supported on $X$, then $p(M)$ is exactly the index of the lowest nonzero Hodge filtration.

Now, we recall Saito's results, generalizing \Kollar{}'s decomposition and torsion freeness theorem.

\begin{theorem} [Saito's decomposition theorem] \label{theorem: saito decomposition} Let $f: X\to Y$ be a projective morphism between two smooth complex varieties. Let $M\in MH(X, w)^p$ be a polarizable Hodge module of pure weight $w$. Then each $H^i f_* M \in MH(Y, w+i)^p$ is of pure weight $w+i$ and admits a non canonical isomorphism 
\[f_* M \cong \bigoplus_{i} H^i(f_* M)[-i].\]
\end{theorem}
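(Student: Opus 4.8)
The plan is to split the statement into two inputs from Saito's theory and then invoke the formal splitting argument of Deligne. The first and deepest input is the \emph{stability of purity under projective pushforward}: if $f:X\to Y$ is projective and $M$ is a polarizable Hodge module of weight $w$ on $X$, then for each $i$ the cohomology $H^i f_* M$ is again a polarizable Hodge module on $Y$, now of pure weight $w+i$. At the level of underlying perverse sheaves, $\text{rat}(H^i f_* M) = {}^p H^i(Rf_* \text{rat}(M))$ and the assertion that these assemble the whole pushforward as a direct sum of shifts is precisely the decomposition theorem of Beilinson--Bernstein--Deligne--Gabber; the content Saito adds is that the Hodge filtration, the weight filtration, and a polarization all survive. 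I would quote this from \cite{Saito-88} (§5.3).

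The second input is \emph{relative hard Lefschetz}. Since $X$ is smooth and $f$ is projective, fix a line bundle on $X$ that is $f$-ample and let $\ell\in H^2(X,\QQ(1))$ be its first Chern class. Cup product with $\ell$ defines a morphism $\ell: f_* M \to (f_* M)(1)[2]$ in $D^b MHM(Y)$, hence for each $i$ a morphism $\ell^i: H^{-i} f_* M \to (H^i f_* M)(i)$. Saito's relative hard Lefschetz theorem (again \cite{Saito-88}, §5.3) asserts that every $\ell^i$ is an isomorphism of Hodge modules; as before the underlying statement is the relative hard Lefschetz of BBD, and the Hodge-theoretic refinement comes from the polarization on $M$.

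Granting these two inputs, the decomposition is purely formal. By the first input $f_* M$ is an object of $D^b MHM(Y)$ all of whose cohomology objects $H^i f_* M$ are semisimple --- being pure and polarizable, each decomposes by strict support into simple summands --- and by the second input the degree-$2$ operator $\ell$ satisfies the hard Lefschetz property on the graded object $\bigoplus_i H^i(f_* M)$. Deligne's lemma on decompositions in the derived category of an abelian category (the Lefschetz splitting principle, which only uses the existence of such an $\ell$) then produces a non-canonical isomorphism $f_* M \cong \bigoplus_i (H^i f_* M)[-i]$ in $D^b MHM(Y)$, which is the claim.

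The genuinely hard step is the first one: proving that the projective pushforward of a polarizable pure Hodge module is again pure with the expected weight shift. This is the technical heart of \cite{Saito-88}, established there by a simultaneous induction on the dimension of the support that interlocks the very definition of pure Hodge modules with their compatibility under nearby and vanishing cycle functors and with the transport of polarizations; I would not attempt to reproduce it and would cite it instead. Everything downstream --- relative hard Lefschetz and the Deligne splitting --- is then either a further citation or a formal homological-algebra argument, and I would present it as such.
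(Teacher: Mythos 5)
This theorem is quoted in the paper as a known result of Saito (with no proof given beyond the citation to \cite{Saito-88}), and your outline --- stability of purity and polarizability under projective pushforward, relative hard Lefschetz for the $f$-ample class, and Deligne's formal splitting lemma --- is exactly the standard argument underlying that citation. Your proposal is correct and takes essentially the same (cited) route as the paper.
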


Because of the strict support decomposition
\[MH(X, w)^p = \bigoplus_Z MH_Z(X, w)^p,\]
each $H^i(f_* M)$ admits further decomposition by strict support. Due to the commutativity of $\gr^F_p \DR$ and $f_*$, we get $p(H^i f_* M) \ge p(f_*M) \ge p(M)$. It turns out that we can detect the inequality of lowest Hodge indexes of the decomposition by the strict support. 

\begin{theorem} [{\cite[Proposition 2.6]{Saito_Kollar_conjecture}, \cite{Saito_Kollar_conjecture}*{Theorem 3.2}}] \label{theorem: saito torsion-freeness}
    Let $f: X\to Y$ be a projective morphism between two complex algebraic varieties and $M\in MH_{Z'}(X, w)^p$ where $Z'\subseteq X$. Let $H^i f_* M = \bigoplus_{Z} M^i_{Z}$ be the decomposition by strict support. Then $p(M^i_Z) > p(M)$ if $Z \neq f(Z')$.

    As a consequence, if $M\in MH_X(X, w)^p$, then 
    \[Rf_* \gr^F_{p(M)} \DR (M) \cong \bigoplus_{p(M^i_{f(X)}) = p(M)} \gr^F_{p(M)} \DR (M^i_{f(X)})[-i].\]
\end{theorem}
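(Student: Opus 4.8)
The plan is to strip away the two soft outer layers (a reduction to the case of full strict support, and the formal passage from the stated inequality to the displayed decomposition) and to invoke Saito's work on \Kollar{}'s conjecture for the one substantive step. First I would reduce to $Z'=X$: if $\iota\colon Z'\hookrightarrow X$ denotes the closed immersion, then every $M\in MH_{Z'}(X,w)^p$ has the form $M=\iota_*M_0$ with $M_0\in MH_{Z'}(Z',w)^p$, and $f_*M=(f\iota)_*M_0$ with $f\iota$ again projective and $(f\iota)(Z')=f(Z')$. Since $\iota_*$ on mixed Hodge modules is exact and faithful and $\gr^F_p\DR(\iota_*M_0)\cong\iota_*\gr^F_p\DR(M_0)$ for the closed immersion $\iota$ by \propositionref{proposition: grDR and f_* commute}, we get $p(M)=p(M_0)$, and the strict-support summands $M^i_Z$ of $H^if_*M$ are precisely those of $H^i(f\iota)_*M_0$. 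Hence it suffices to treat $M\in MH_X(X,w)^p$, for which every occurring strict support $Z$ lies in $f(X)$; writing $p_0\coloneqq p(M)$, what remains is to prove $p(M^i_Z)>p_0$ whenever $Z\subsetneq f(X)$, and then the displayed consequence.

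For the inequality one half is formal: \theoremref{theorem: saito decomposition} together with $\gr^F_p\DR\circ f_*\cong Rf_*\circ\gr^F_p\DR$ gives $p(M^i_Z)\ge p_0$ for every $Z$, as was essentially observed just before the theorem, so only the strict inequality for $Z\subsetneq f(X)$ has content. For that I would appeal directly to Saito. Unwinding the definition of $\gr^F\DR$, the object $\gr^F_{p_0}\DR(M)$ reduces to a single coherent sheaf, essentially the lowest Hodge piece $F_{p_0}\M$, which is torsion-free on $X$ because $M$ has strict support $X$; the assertion is then that pushing this sheaf forward produces nothing in Hodge level $p_0$ along the deeper strata $Z\subsetneq f(X)$, the summand with strict support $f(X)$ already accounting for everything at that level. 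This is \cite{Saito_Kollar_conjecture}; its proof is intertwined with that of the decomposition theorem and rests on a local Hodge-theoretic analysis at the generic point of each $Z$ (through the nearby and vanishing cycle functors and the weight filtration). I would not reproduce it; this is the step I expect to be the real obstacle.

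Granting the inequality, the decomposition follows formally. Applying $\gr^F_{p_0}\DR$ to the non-canonical isomorphism $f_*M\cong\bigoplus_i(H^if_*M)[-i]$ of \theoremref{theorem: saito decomposition}, and using $\gr^F_{p_0}\DR\circ f_*\cong Rf_*\circ\gr^F_{p_0}\DR$ together with $H^if_*M=\bigoplus_ZM^i_Z$, one obtains in $D^b_{coh}(Y)$
\[ Rf_*\,\gr^F_{p_0}\DR(M)\;\cong\;\bigoplus_{i}\bigoplus_{Z}\gr^F_{p_0}\DR(M^i_Z)[-i]. \]
Since $p(M^i_Z)\ge p_0$ always, the term $\gr^F_{p_0}\DR(M^i_Z)$ is acyclic unless $p(M^i_Z)=p_0$, which by the inequality forces $Z=f(X)$; discarding the vanishing terms leaves
\[ Rf_*\,\gr^F_{p_0}\DR(M)\;\cong\;\bigoplus_{i\,:\,p(M^i_{f(X)})=p_0}\gr^F_{p_0}\DR(M^i_{f(X)})[-i], \]
which is the asserted formula. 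As a byproduct, each surviving $\gr^F_{p_0}\DR(M^i_{f(X)})$ is itself the lowest Hodge piece of a pure Hodge module with strict support $f(X)$, hence a torsion-free sheaf in a single degree, so one also recovers the \Kollar{}-type torsion-freeness of $R^if_*\gr^F_{p_0}\DR(M)$ on $f(X)$.
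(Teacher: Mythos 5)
Your proposal is correct and follows essentially the same route as the paper: the strict inequality $p(M^i_Z) > p(M)$ for $Z \neq f(Z')$ is taken from Saito's work on \Kollar{}'s conjecture, and the displayed decomposition is obtained exactly as you do, by applying $\gr^F_{p(M)}\DR$ to Saito's decomposition $f_*M \cong \bigoplus_i H^i f_* M[-i]$, using $\gr^F_p\DR \circ f_* \cong Rf_* \circ \gr^F_p\DR$, and discarding the summands with $p(M^i_Z) > p(M)$, which are acyclic by definition of $p(\cdot)$. The initial reduction to $Z'=X$ is harmless but not needed, since Saito's statement already covers arbitrary strict support.
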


When $f:X \to Y$ is a surjective map with $X$ smooth and $M = \QQ^H_X[d_X]$ the trivial mixed Hodge module (see Section \ref{section: Du Bois and intersection complex} for the definition), Theorem \ref{theorem: saito torsion-freeness} says that 
\[Rf_* \omega_X \cong \bigoplus_{i} \gr^F_{-d_X} \DR (M^i_{Y})[-i]\]
and that $p(H^i f_* \QQ_X^H[d_X]) \ge -d_X$ and
\[\gr^F_{-d_X} \DR(M^i_Y) = gr^F_{-d_X} \DR(H^if_* \QQ^H_X[d_X]) = R^if_* \omega_X\]
by Proposition \ref{proposition: grDR and f_* commute} and the strictness of the Hodge filtration. This recovers \Kollar{}'s decomposition and torsion-freeness theorem which states that 
\begin{enumerate}
    \item $R^if_* \omega_X$ is torsion-free.
    \item $Rf_* \omega_X \cong \bigoplus_i R^if_*\omega_X[-i]$.
\end{enumerate}

\subsection{Intersection complex and Du Bois complex} \label{section: Du Bois and intersection complex}

Mixed Hodge modules are defined inductively on the dimension of the variety $X$. In the base case, when $X = \Spec (\CC)$ is a point, the category $MHM(pt)$ is equivalent to the category of $\QQ$-mixed Hodge structures (see \cite[Theorem 3.9]{saito-MHM}). In particular, there is a trivial mixed Hodge module on a point 
\[\QQ^H_{pt} \coloneqq (\CC, F_\bullet \CC, \QQ; W)\]
where $F_0 \CC = \CC,\  F_{-1} \CC = 0$ and $W_0 \QQ = \QQ,\  W_{-1} \QQ = 0$; it corresponds to the trivial Hodge structure $\QQ$ of weight $0$. 

Let $X$ be an algebraic variety. The trivial mixed Hodge module on $X$ is defined as
\[\QQ^H_X \coloneqq a_X^* \QQ^H_{pt}\]
where $a_X: X \to pt$. When $X$ is smooth, one can easily see that 
\[\QQ^H_X[d_X] = (\omega_X, F_\bullet \omega_X, \underline{\QQ}_X[d_X])\]
is a polarizable Hodge module of pure weight $d_X$ with $F_{-d_X} \omega_X = \omega_X$ and $F_{-d_X-1} \omega_X = 0$. In general, the underlying $\QQ$-complex of $\QQ^H_X$ is the constant sheaf $\underline \QQ_X$, and the cohomologies of the direct image $a_{X*} \QQ^H_X \in D^bMHM(pt)$ give us the mixed Hodge structures on the singular cohomologies of $X$ with rational coefficient. Since the functor $\text{rat}$ is faithful and $\prescript{p}{}{\H}^i(\underline \QQ_X[d_X]) = 0$ for $i>0$, we get
\[H^i \QQ_X^H = 0 \quad \text{for} \quad i > d_X.\]
We say that $M^\bullet \in D^b MHM(X)$ is of weight $\le w$ (resp. $\ge w$) if 
\[\gr^W_q H^iM = 0 \quad \text{for} \quad q > w +i \text{ (resp. } q<w+i).\]
Moreover, this condition is stable under the functors $f_!, f^*$ (resp. $f_*, f^!$). As a consequence, the trivial mixed Hodge module $\QQ^H_X$ is of weight $\le 0$; in particular, 
\[\gr^W_i H^{d_X} \QQ^H_X = 0 \quad \text{for} \quad i>d_X.\]
If $X$ is irreducible or equidimensional, the boundary term $\gr^W_{d_X} H^{d_X} \QQ^H_X$ is exactly the \textit{intersection complex} $\IC_X \QQ^H$, which, by definition, is the unique object in $MHM(X)$ such that its restriction to $X_{reg}$ is $\QQ^H_{X_{reg}}[d_X]$ and it has no subobject or quotient in $MHM(X)$ whose support is contained in $X_{\sing}$. We know that $\IC_X \QQ^H$ is simple and pure of weight $d_X$, and its underlying $\QQ$-complex is the well-known intersection cohomology complex $\IC_X^\bullet$ defined as the intermediate extension of constant sheaf $\underline \QQ_{X_{\reg}}$ on the smooth locus of $X$. Similar to the self-duality of $\IC_X^\bullet$, the intersection complex is self-dual up to a Tate twist
\[\bD_X (\IC_X\QQ^H) \cong \IC_X \QQ^H (d_X).\]
We refer the reader to \cite[Section 4.5]{saito-MHM} for details. 

\medskip

Now, we focus on the object of our interest - the (intersection) Du Bois complex. Here we only recall important facts and refer the readers to \cite{DB-injectivity} and \cite{Mihnea-Sunggi} for detailed discussion of (intersection) Du Bois complexes. For a smooth projective variety $X$, the de Rham complex is a resolution of constant sheaf $\underline \CC_X$ by sheaves of differential forms
\[\underline \CC_X \to \O_X \xto{d} \Omega_X^1 \xto{d} \dots \xto{d} \Omega^{d_X}_X.\]
The filtration by truncation $F^p\Omega^\bullet_X = \Omega^{\bullet \ge p}_X$
induces the Hodge structures on the singular cohomology of $X$. In \cite{DB}, Du Bois constructed the Du Bois complex $\DB^\bullet_X$ via hyperresolution as an analog of the de Rham complex for singular varieties. It is an object in the derived category of filtered complexes on $X$, quasi-isomorphic to the constant sheaf $\underline \CC_X$. The $p$-th (shifted) associated graded piece 
\[\DB^p_X \coloneqq \gr^p_F \DB^\bullet_X[p]\]
is an object in $D_{coh}^b(X)$, which we call \textit{$p$-th Du Bois complex}. For any $p$, there is a natural map 
\[\Omega^p_X \to \DB^p_X\]
which is an isomorphism if $X$ is smooth. Even though Du Bois complexes are complicated objects, the top Du Bois complex $\DB_X^{d_X}$ can be described simply due to \cite{Steenbrink-vanishing}*{Proposition 3.3} as:
\[\DB_X^{d_X} \cong Rf_* \omega_{\tilde X} \cong f_* \omega_{\tilde X}\]
where $\tilde X$ is a log resolution of $X$ and the second isomorphism follows from Grauert-Riemenschneider vanishing theorem. From the perspective of mixed Hodge modules, as a consequence of \cite[Theorem 4.2, Corollary 0.3]{Saito-MHC}, Saito shows that 
\begin{equation} \label{DB as grDR}
    \DB^p_X \cong \gr^F_{-p} \DR(\QQ^H_X[d_X])[p-d_X]
\end{equation}
for any $p$, giving mixed Hodge modules interpretation for Du Bois complexes. Similarly, we define
\begin{equation} \label{IC as grDR}
    I\DB_X^p \coloneqq \gr^F_{-p} \DR(\IC_X\QQ^H)[p-d_X]
\end{equation}
for intersection complex and call this \textit{intersection Du Bois complex}. The intersection Du Bois complexes generalize \Kahler{} differentials while maintaining the \Poincare{} duality. More precisely, the self-duality $\bD_X(\IC_X\QQ^H)\cong \IC_X\QQ^H(d_X)$ implies that 
\[\DD_X(I\DB_X^k) \cong I\DB_X^{d_X-k}[d_X].\]
The natural morphism $\QQ^H_X[d_X] \to \gr^W_{d_X} H^{d_X} \QQ^H_X \cong \IC_X\QQ^H$ induces canonical morphisms
\[\DB^p_X \to I\DB_X^p\]
and dually
\[I\DB_X^p \to \DD_X(\DB^{d_X-p}_X)[-d_X].\]
Moreover, the definitions of (intersection) Du Bois complexes imply that $p(\QQ_X^H[d_X]) = p(\IC_X\QQ^H) = -d_X$ and $p(\bD(\QQ_X^H[d_X])) = 0$.

\subsection{Local cohomological dimension}

An important invariant in the study of Du Bois complexes is the local cohomological dimension. If a variety $X$ is embedded into a smooth variety $Y$, the local cohomological dimension of $X$ in $Y$, considered by Ogus in \cite{Ogus-lcd}, is defined as 
\begin{equation} \label{definition: lcdef}
    \lcd(X,Y) \coloneqq \max\{i|\  \H^i_X(F) \neq 0\ \text{ for any coherent sheaf $F$}\}.
\end{equation}
By \cite[Proposition 2.1]{Ogus-lcd}, it suffices to consider $F=\O_Y$ in the definition of $\lcd(X,Y)$. Moreover, it is known that $\H^i_X(\O_Y)$ vanishes when $i$ lies outside the range $\codim(X,Y)\le i \le \lcd(X,Y)$ (see \cite{Ogus-lcd}*{Theorem 2.8}). We recall several characterizations of $\lcd(X,Y)$ in terms of Du Bois complexes of $X$ and the trivial Hodge module $\QQ^H_X[d_X]$. 

\begin{proposition}[{\cite[Corollary 12.6]{MP-lci}}] \label{proposition: lcd and DB}
    For every positive integer $c$, the following are equivalent:
    \begin{enumerate}
        \item $\lcd(X,Y) -d_Y \le c$.
        \item $\sExt^{j+p+1}_{\O_X}(\DB^p_X, \omega_Y^{\bullet})=0$ for all $j\ge c$ and $p\ge 0$.
    \end{enumerate}
\end{proposition}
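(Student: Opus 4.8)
The plan is to recast both conditions as cohomological concentration statements for a single object, and to bridge them with Saito's duality for the graded de Rham functor. Fix an embedding $i\colon X\hookrightarrow Y$ with $Y$ smooth, put $n=d_Y$, and set $M\coloneqq i^!\QQ^H_Y[n]\in D^bMHM(X)$. On the level of $\mathcal D_Y$-modules (through $i_*$), the complex underlying $M$ has cohomology sheaves $\H^q_X(\omega_Y)\cong\H^q_X(\O_Y)\otimes_{\O_Y}\omega_Y$, which are nonzero exactly for $\codim(X,Y)\le q\le\lcd(X,Y)$; hence condition $(1)$ is precisely the statement that this complex lies in $D^{\le n+c}$. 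On the other hand, by Grothendieck duality (and $\omega_X^\bullet\cong i^!\omega_Y^\bullet$) we have $\sExt^{j+p+1}_{\O_X}(\DB^p_X,\omega_Y^\bullet)\cong\H^{j+p+1}\DD_X(\DB^p_X)$, so condition $(2)$ says exactly that $\DD_X(\DB^p_X)\in D^{\le c+p}_{coh}(X)$ for every $p\ge0$ --- equivalently, for every $p\in\ZZ$, since $\DB^p_X=0$ for $p\notin[0,d_X]$.

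Next I would express $\DD_X(\DB^p_X)$ through $M$. From $\bD_X\circ i^!\cong i^*\circ\bD_Y$ (Proposition~\ref{proposition: base change and commute with dual}), the polarizability isomorphism $\bD_Y(\QQ^H_Y[n])\cong\QQ^H_Y[n](n)$, and $i^*\QQ^H_Y\cong\QQ^H_X$, one gets $M\cong\bD_X(\QQ^H_X[d_X])[d_X-n](-n)$; feeding this into $\gr^F_q\DR(\bD_X(-))\cong\DD_X\circ\gr^F_{-q}\DR(-)$ (Proposition~\ref{proposition: grDR and f_* commute}) and $\DB^r_X=\gr^F_{-r}\DR(\QQ^H_X[d_X])[r-d_X]$, a short reckoning of the Tate twist and shifts gives
\[\gr^F_p\DR(M)\;\cong\;\DD_X\bigl(\DB^{\,p+n}_X\bigr)[p]\qquad(p\in\ZZ).\]
Thus, reindexing by $q=p+n$, condition $(2)$ is equivalent to: $\gr^F_p\DR(M)\in D^{\le n+c}_{coh}(X)$ for all $p\in\ZZ$. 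The proposition has therefore been reduced to showing that the $\mathcal D_Y$-complex underlying $M$ lies in $D^{\le n+c}$ if and only if all its graded de Rham complexes do. The implication $(1)\Rightarrow(2)$ is routine: for a mixed Hodge module sitting in cohomological degree $m$ the complex $\gr^F_k\DR(-)$ is supported in degrees $[m-n,m]$, so running $\gr^F_k\DR$ through the cohomology truncation of $M$ shows it cannot raise the top cohomological degree, and a bound on the $\mathcal D_Y$-complex of $M$ passes to every $\gr^F_p\DR(M)$.

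For $(2)\Rightarrow(1)$, suppose $b$, the top nonvanishing cohomological degree of the $\mathcal D_Y$-complex of $M$, satisfies $b>n+c$, and let $N\coloneqq\H^b(M)\in MHM(X)$, a nonzero mixed Hodge module with underlying $\mathcal D_Y$-module $\mathcal N$ and $p_0\coloneqq p(N)=\min\{k:F_k\mathcal N\neq0\}$. In cohomological degree $0$ the complex $\gr^F_{p_0}\DR(N)$ equals $\coker\bigl(\gr^F_{p_0-1}\mathcal N\otimes_{\O_Y}T_Y\to\gr^F_{p_0}\mathcal N\bigr)=\gr^F_{p_0}\mathcal N=F_{p_0}\mathcal N\neq0$. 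Applying $\gr^F_{p_0}\DR$ to the triangle $\tau_{<b}M\to M\to N[-b]\xto{+1}$, and using $\gr^F_{p_0}\DR(\tau_{<b}M)\in D^{\le b-1}$ (again by the degree count), the long exact sequence yields a surjection $\H^b\gr^F_{p_0}\DR(M)\twoheadrightarrow\H^0\gr^F_{p_0}\DR(N)\neq0$; since $b>n+c$ this contradicts $(2)$. The main obstacle is exactly this step --- detecting the top mixed Hodge module cohomology of $M$, in the top cohomological degree, by its lowest-index graded de Rham piece, and checking that the truncation triangle does not wash it out; this is where one genuinely uses that $p(N)$ is the lowest nonzero step of the Hodge filtration on $\mathcal N$. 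A lesser nuisance is keeping the Tate twist and shifts straight in the bridge identity $\gr^F_p\DR(M)\cong\DD_X(\DB^{p+n}_X)[p]$.
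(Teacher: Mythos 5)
The paper does not actually prove this proposition; it is imported wholesale from \cite{MP-lci}, so there is no internal argument to compare with. Your proof is correct as far as I can check, and it is in substance the strategy of the cited source, compressed: you encode local cohomology as the mixed Hodge module complex $M=i^!\QQ^H_Y[d_Y]$ and dualize it against the Du Bois complexes. The bridge identity $\gr^F_p\DR(M)\cong \DD_X(\DB^{p+d_Y}_X)[p]$ does follow from $\bD_X\circ i^!\cong i^*\circ\bD_Y$, the polarization isomorphism $\bD_Y(\QQ^H_Y[d_Y])\cong\QQ^H_Y[d_Y](d_Y)$, the Tate-twist convention $F_pM(k)=F_{p-k}M$, and Proposition \ref{proposition: grDR and f_* commute}; I checked the shifts and it converts condition $(2)$ precisely into $\gr^F_p\DR(M)\in D^{\le d_Y+c}$ for all $p$. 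Your two-step comparison with the cohomological amplitude of $M$ is also sound: the easy direction uses only that $\gr^F_k\DR$ of a single module placed in degree $m$ sits in degrees $[m-d_Y,m]$, and the converse correctly detects the top cohomology $N=H^b(M)$ through its lowest Hodge piece, since $\H^0\gr^F_{p(N)}\DR(N)=\gr^F_{p(N)}\mathcal N\neq 0$ and $\gr^F_{p(N)}\DR(\tau_{<b}M)$ lives in degrees $\le b-1$, so the long exact sequence gives the needed surjection.

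Two remarks. First, a cosmetic point: the local cohomology sheaves $\H^q_X(\omega_Y)$ need not be nonzero for \emph{every} $q$ in $[\codim(X,Y),\lcd(X,Y)]$; all you use, and all that is true in general, is that the top nonvanishing degree is $\lcd(X,Y)$, so nothing breaks. Second, as literally printed the proposition is vacuous for positive $c$: since $\H^i_X(\O_Y)=0$ for $i>d_Y$ one always has $\lcd(X,Y)-d_Y\le 0<c$, and correspondingly condition $(2)$ with the dualizing complex $\omega_Y^\bullet=\omega_Y[d_Y]$ holds automatically in that range. Your argument never uses positivity of $c$ and in fact establishes the equivalence for every integer $c$, which is the version the paper actually invokes later (for instance $\lcdef(X)=0$, i.e.\ $c=-d_X$, in the proof of Proposition \ref{proposition: nonnegativity_Euler_characteristic}); it would be worth stating explicitly that your proof covers arbitrary $c$, since that is the meaningful content of the statement.
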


By using Proposition \ref{proposition: grDR and f_* commute}, it is not hard to obtain the following characterization of $\lcd(X,Y)$:

\begin{theorem}[{\cite{Ogus-lcd}*{Theorem 2.13}, \cite[Theorem 1]{saito-lcd}, \cite{Bhatt}*{Section 3.1}}] \label{theorem: cohom range of Q_X} With the above notations, 
\[\codim(X,Y)-\lcd(X,Y) = \min\{i|H^i(\QQ^H_X[d_X])\neq 0\}.\] 
\end{theorem}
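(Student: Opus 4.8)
The plan is to fix a closed embedding $i\colon X\hookrightarrow Y$ into a smooth variety $Y$ of dimension $d_Y$, write $c=\codim(X,Y)=d_Y-d_X$, and let $j\colon U=Y\smallsetminus X\hookrightarrow Y$ be the open complement; since both sides of the asserted equality are known to be independent of the embedding, it suffices to prove it for one choice. The starting point is the localization triangle in $D^bMHM(Y)$
\[i_*i^!\QQ^H_Y[d_Y]\longrightarrow \QQ^H_Y[d_Y]\longrightarrow Rj_*\,\QQ^H_U[d_Y]\xto{+1},\]
which is compatible with the corresponding triangle of underlying $\mathcal D_Y$-module complexes. Since $Y$ is smooth, the right $\mathcal D_Y$-module underlying $\QQ^H_Y[d_Y]$ is $\omega_Y$, and $\QQ^H_U[d_Y]=\QQ^H_U[d_U]$ because $U$ is open in $Y$; hence the $\mathcal D_Y$-module complex underlying $Rj_*\QQ^H_U[d_Y]$ is $Rj_*(\omega_Y|_U)$, and therefore the first term of the triangle has underlying complex the local cohomology complex $R\Gamma_X(\omega_Y)$, with cohomology sheaves $\mathcal H^k_X(\omega_Y)\cong\mathcal H^k_X(\O_Y)\otimes\omega_Y$.

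By definition of $\lcd(X,Y)$ and the vanishing of $\mathcal H^k_X(\O_Y)$ outside the range $c\le k\le\lcd(X,Y)$, these cohomology sheaves vanish for $k>\lcd(X,Y)$ and are nonzero for $k=\lcd(X,Y)$. Since a complex of mixed Hodge modules has nonzero $k$-th cohomology in $MHM$ if and only if its underlying $\mathcal D_Y$-module complex does, it follows that the top nonvanishing cohomological degree of $i_*i^!\QQ^H_Y[d_Y]$ equals $\lcd(X,Y)$. Next I rewrite $i^!\QQ^H_Y[d_Y]$ in terms of $\QQ^H_X$: using $\bD_X\circ i^!\cong i^*\circ\bD_Y$ from Proposition~\ref{proposition: base change and commute with dual}, the self-duality $\bD_Y(\QQ^H_Y[d_Y])\cong\QQ^H_Y[d_Y](d_Y)$ of the trivial Hodge module on the smooth variety $Y$, and the identity $i^*\QQ^H_Y\cong\QQ^H_X$ (pullback of a Hodge module pulled back from a point), one obtains $\bD_X(i^!\QQ^H_Y[d_Y])\cong\QQ^H_X[d_Y](d_Y)$, and hence
\[i^!\QQ^H_Y[d_Y]\;\cong\;\bD_X(\QQ^H_X[d_X])[-c](-d_Y).\]

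Now $i_*$ is exact, and a Tate twist affects neither the underlying complex nor its cohomological degrees, so comparing with the previous paragraph shows that the top nonvanishing cohomological degree of $\bD_X(\QQ^H_X[d_X])$ equals $\lcd(X,Y)-c$. Finally, since $\bD_X$ is an exact contravariant functor on $MHM(X)$, one has $H^k(\bD_X(\QQ^H_X[d_X]))\cong\bD_X(H^{-k}(\QQ^H_X[d_X]))$, which is nonzero precisely when $H^{-k}(\QQ^H_X[d_X])$ is; hence the top nonvanishing degree of $\bD_X(\QQ^H_X[d_X])$ equals $-\min\{i\mid H^i(\QQ^H_X[d_X])\neq 0\}$, and we conclude
\[\min\{i\mid H^i(\QQ^H_X[d_X])\neq 0\}=c-\lcd(X,Y)=\codim(X,Y)-\lcd(X,Y).\]

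The work here is mostly bookkeeping rather than conceptual: the delicate point is to keep track of the normalizations and shifts relating Saito's six-functor formalism on mixed Hodge modules to the underlying $\mathcal D_Y$-module formalism — in particular the identification of the complex underlying $i^!\QQ^H_Y[d_Y]$ with the classical local cohomology $R\Gamma_X(\omega_Y)$ — and to verify that the Tate twists introduced by duality do not secretly shift a cohomological degree. An alternative route, staying within coherent sheaves, is to combine $\DB^p_X\cong\gr^F_{-p}\DR(\QQ^H_X[d_X])[p-d_X]$ and the duality $\DD_X(\gr^F_p\DR(M^\bullet))\cong\gr^F_{-p}\DR(\bD_X M^\bullet)$ of Proposition~\ref{proposition: grDR and f_* commute} with the $\sExt$-criterion for $\lcd(X,Y)$ in Proposition~\ref{proposition: lcd and DB}, thereby reducing the statement to the fact that the top nonvanishing cohomological degree of the $\mathcal D_Y$-complex underlying a complex of mixed Hodge modules is detected by the coherent complexes $\gr^F_p\DR(-)$ as $p$ varies, which follows from strictness of the Hodge filtration. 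I would present the first route, as it is the most transparent.
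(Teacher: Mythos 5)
The paper itself gives no proof of this statement: it is imported from Ogus, Saito and Bhatt, with only the parenthetical remark that it can be obtained using Proposition \ref{proposition: grDR and f_* commute} --- essentially the coherent route you sketch at the end, via the $\sExt$-criterion of Proposition \ref{proposition: lcd and DB}. Your main argument is therefore compared against the cited sources rather than an in-paper proof, and it is correct; it is in substance the Riemann--Hilbert argument of Saito/Bhatt. The two pivots both check out: (i) the identification of the complex underlying $i_*i^!\QQ^H_Y[d_Y]$ with $R\Gamma_X(\omega_Y)$, which follows from the localization triangle once one grants the (standard, but worth citing) compatibility of Saito's open direct image $j_*$ with the $\sD$-module direct image, so that the underlying complex of $j_*\QQ^H_U[d_Y]$ is the quasi-coherent $Rj_*(\omega_Y|_U)$; combined with faithfulness and exactness of the forgetful functor this shows the top nonvanishing MHM-cohomological degree of $i_*i^!\QQ^H_Y[d_Y]$ is exactly $\lcd(X,Y)$, where the nonvanishing at $k=\lcd(X,Y)$ uses Ogus's reduction of the definition to $F=\O_Y$, which the paper records; and (ii) the twist-and-shift bookkeeping $i^!\QQ^H_Y[d_Y]\cong \bD_X(\QQ^H_X[d_X])[-\codim(X,Y)](-d_Y)$, which is right ($\bD_X(\QQ^H_X)[-d_X-c]=\bD_X(\QQ^H_X)[-d_Y]$), and since $i_*$ is exact and Tate twists do not move cohomological degrees, it converts the previous computation into $\min\{i\mid H^i(\QQ^H_X[d_X])\neq 0\}=\codim(X,Y)-\lcd(X,Y)$ via $H^k(\bD_X M^\bullet)\cong \bD_X(H^{-k}M^\bullet)$. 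Compared with the route the paper hints at, your argument stays entirely within the six-functor formalism for mixed Hodge modules plus classical local cohomology, whereas the hinted route funnels the same duality through $\gr^F_p\DR$ and Proposition \ref{proposition: lcd and DB}; yours is self-contained and makes it transparent where the two numbers $\codim(X,Y)$ and $\lcd(X,Y)$ enter, at the modest cost of having to invoke the D-module--level compatibilities of Saito's formalism explicitly.
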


The \textit{local cohomological defect} of $X$ is defined to be
\[\lcdef(X) \coloneqq \lcd(X,Y) - \codim(X,Y).\]
This quantity is independent of the embedding $X\subset Y$ by Proposition \ref{proposition: lcd and DB}. Moreover,
\[0\le \lcdef(X)\le d_X\] 
since it is well-known that $\H^i_X(\O_Y)$ = 0 for $i > d_Y$. 

\begin{remark}
    $\lcdef(X) = 0$ is equivalent to say that $\codim(X,Y) = \lcd(X,Y)$; in other words, the only nonzero local cohomology is $\H^{\codim(X,Y)}_X(\O_Y)$. Obviously, smooth varieties have zero local cohomological defect. This also holds for local complete intersections or rational homology manifolds.      
\end{remark}

\subsection{Integral functors and GV-objects} \label{section: integral functors} 
First, we recall the definition of cohomological support loci. Let $X$ and $Y$ be projective varieties, and $P$ a perfect object on $D_{coh}^b(X\times Y)$ (i.e., an object represented by a bounded complex of locally free sheaves of finite rank). For any $F \in D_{coh}^b(X)$, we denote by 
\[V^i_P(F) \coloneqq \{y\in Y\ |\ H^i(X, F\otimes^\LL Li_{y}^* P) \neq 0\}\]
the \textit{$i$-th cohomological support locus} of $F$ with respect to $P$, where $i_y: X \times \{y\} \hookrightarrow X\times Y$. The content of the generic vanishing theory is to study the cohomological support locus, when $Y = \Pic^0(X)$ and $P=P_X$ is the universal line bundle; thanks to many people, the structure of these sets is well-understood in the following aspects: 
\begin{itemize}
    \item One has $\codim V^i_{P_X}(\omega_X)\ge i-\dim X + \dim a(X)$ for all $i$ (see \cite{GL87}, \cite{GL91}). This implies the classical generic vanishing of Green and Lazarsfeld by applying Serre duality.
    \item The irreducible components of each $V^i_{P_X}(\omega_X)$ are torsion translates of abelian subvarieties of $\Pic^0(X)$ (see \cite{GL87}, \cite{GL91}, \cite{Sim-rank1_locsys})
\end{itemize}

In \cite{Hacon-gen_vanishing}, Hacon observes that generic vanishing property is closely related to the Fourier-Mukai transforms studied by Mukai in \cite{Mukai_transform}. Let us recall this relation in the general setting. Let $p_X$ and $p_Y$ are the projections from $X\times Y$ to $X$ and $Y$ correspondingly. We define the \textit{integral functors}:
\[R\Phi_P:D_{coh}^b(X) \to D_{coh}^b(Y), \quad R\Phi_P(\cdot) \coloneqq Rp_{Y*}(Lp_X^* (\cdot) \otimes^\LL_{\O_{X\times Y}} P)\]
and 
\[R\Psi_P:D_{coh}^b(Y) \to D_{coh}^b(X), \quad R\Psi_P(\cdot) \coloneqq Rp_{X*}(Lp_Y^*(\cdot) \otimes^{\LL}_{\O_{X\times Y}} P)\]
Although the locus $\supp(R^i\Phi_{P}(F))$ is only contained inside $V^i_P(\F)$, by a standard cohomological base change argument, the two loci carry the same numerical information in the sense that for any nonnegative integer $k$, the following are equivalent \cite[Lemma 3.6]{PP_Generic_Vanishing}:
\begin{enumerate}
    \item $\codim(V^i_P(F), Y) \ge i-k$ for all $i \ge 0$.
    \item $\codim(\supp R^i\Phi_{P}(F), Y) \ge i-k$ for all $i\ge 0$. 
\end{enumerate}
Thus one can study the support of integral functors to gain information of the dimension of cohomological support loci. It is then natural to introduce the following notions (see \cite{PP_Generic_Vanishing}*{Section 2 and 3} for details):

\begin{definition}                                   
    For any integer $k\ge 0$, an object $F\in D_{coh}^b(X)$ is said to be a \textit{$GV_{-k}$-object with respect to $P$} if                             
    \[\codim(\supp R^i\Phi_{P}(F), Y) \ge i-k \quad \text{for all } i\ge 0\]        
    When $k=0$, we simply say $F$ is a $GV$-object.                            
\end{definition}                       

Another key concept in Fourier-Mukai theory is that of an object satisfying $WIT$ (Weak Index Theorem) on abelian varieties. This property is generalized to the theory of integral functors:

\begin{definition}                  
    An object $F \in D_{coh}^b(X)$ satisfies \textit{Weak Index Theorem with index $j$}, with respect to $P$, if $R^i\Phi_P(F) = 0$ for all $i\neq j$. For simplicity, we also say that $F$ is $WIT_{j}$ (with respect to $P$).             
    
    More generally, we say $F$ is $WIT_{\ge j}$ (resp. $WIT_{\le j}$) if $R^i\Phi_P(F) = 0$ for all $i<j$ (resp. $i>j$) (with respect to $P$).                
\end{definition}                   

\begin{remark} \label{GV and WIT with shift}
    From the definitions, it is easy to check that                   
\begin{align*}                    
    \begin{split}             
        &F \text{ is } GV_{-k} \text{ if and only if } F[j] \text{ is } GV_{j-k} \text{ and }\\                              
        &F \text{ is } WIT_{\ge k} \text{ if and only if } F[j] \text{ is } WIT_{\ge k-j} \text{ for any } j\le k.                         
    \end{split}           
\end{align*} 
\end{remark}

Let $f:X'\to X$ be a morphism from a projective variety $X'$ to $X$ and $P'\coloneqq L(f\times id_Y)^*(P)$ a perfect object in $D_{coh}^b(X'\times Y)$. Assume further that $Y$ is smooth; this condition is satisfied, for instance, when $X$ is normal or abelian variety and $Y= \Pic^0(X)$. Consider the following commutative diagram:
\[\begin{tikzcd}                           
	{X'} && {X'\times Y} \\
	\\
	X && {X\times Y} && Y
	\arrow["f"', from=1-1, to=3-1]
	\arrow["{p_{X'}}"', from=1-3, to=1-1]
	\arrow["{p_X}"', from=3-3, to=3-1]
	\arrow["{f\times id_Y}"{description}, from=1-3, to=3-3]
	\arrow["{p_Y}"', from=3-3, to=3-5]
	\arrow["{p_Y'}", from=1-3, to=3-5]
\end{tikzcd}\]
For any $F\in D_{coh}^b(X')$, by projection formula and base change, we get 
\begin{align*}
    R\Phi_P(Rf_* F) &= Rp_{Y*}(Lp_X^* (Rf_*F) \otimes^\LL P) \\
    &\cong Rp_{Y*}\big (R(f\times id_Y)_* (Lp_{X'}^*F)\otimes^{\LL} P\big) \quad (\text{by base change}) \\
    &\cong Rp'_{Y*}(Lp_{X'}^* F \otimes^{\LL} P') \quad (\text{by projection formula}) \\
    &= R\Phi_{P'}(F)
\end{align*}
Thus we can say $F$ is $GV_{-k}$ (resp. $WIT_{\ge k}$) on $X'$ (with respect to $P'$) and $Rf_*F$ is $GV_{-k}$ (resp. $WIT_{\ge k}$) on $X$ (with respect to $P$) interchangeably.

\medskip

The $GV$ and $WIT$ properties are closely related via duality. Note that by base change, for any sufficiently ample line bundle $L$ on $Y$, we have $L^{-1}$ is $WIT_{d_Y}$ on $X$ with respect to $P$ and $R^{d_Y}\Psi_P(L^{-1})$ is locally free sheaf on $X$. Using this, $GV$ and $WIT$ properties can be checked by the vanishing of some hypercohomology group as follows.

\begin{theorem}[{\cite{Hacon-gen_vanishing}*{Theorem 1.2}},{\cite[Theorem 3.7]{PP_Generic_Vanishing}}] \label{theorem: GV and WIT equiv}
    Let $X$ and $Y$ be projective Cohen-Macaulay varieties and $P$ a perfect object in $D_{coh}^b(X\times Y)$. For $F\in D_{coh}^b(X)$ and $k\ge 0$, the following are equivalent:
    \begin{enumerate}
        \item $F$ is a $GV_{-k}$-object with respect to $P$. 
        \item $\DD_X(F)$ is $WIT_{\ge -k}$ with respect to $P^\vee \otimes^{\LL} Lp_Y^* \omega_Y$ where $P^\vee = R\sHom(P,\O_{X\times Y})$. 
        \item $\HH^i(X, F\otimes^{\LL}_{\O_X} R^{d_Y}\Psi_P(L^{-1}))=0$ for all $i>k$ and for any sufficiently ample line bundle $L$ on $Y$. 
    \end{enumerate} 
\end{theorem}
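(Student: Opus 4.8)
We sketch how to prove the theorem; the plan is to establish the two equivalences $(1)\Leftrightarrow(2)$ and $(2)\Leftrightarrow(3)$, which together give the statement. The first is a consequence of Grothendieck--Serre duality applied to the proper projection $p_Y\colon X\times Y\to Y$, combined with a purely homological translation between codimension-of-support bounds for a complex on $Y$ and the vanishing of the low-degree $\sExt$ sheaves of its $\O_Y$-dual. The second will follow by pairing the integral transforms against a very negative line bundle on $Y$ and invoking Serre duality together with the fact that, for $L$ sufficiently ample, the functor $H^0(Y,-\otimes L)$ detects vanishing of coherent sheaves.

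First I would handle $(1)\Leftrightarrow(2)$. Since $X$ and $Y$ are Cohen--Macaulay, so is $X\times Y$, and $\omega_{X\times Y}^\bullet\cong Lp_X^*\omega_X^\bullet\otimes^{\LL}Lp_Y^*\omega_Y^\bullet$ with $\omega_X^\bullet\cong\omega_X[d_X]$ and $\omega_Y^\bullet\cong\omega_Y[d_Y]$ shifted line bundles; in particular the relative dualizing complex of $p_Y$ is $Lp_X^*\omega_X[d_X]$. Running Grothendieck duality for $p_Y$, using that $P$ is perfect to pull $P^\vee$ out of the inner $R\sHom$ and that $p_X$ is flat so that $Lp_X^*$ commutes with forming the Grothendieck dual over $X$, one obtains a canonical isomorphism
\[
R\Phi_{P^\vee\otimes^{\LL}Lp_Y^*\omega_Y}\bigl(\DD_X F\bigr)\;\cong\;\DD_Y\bigl(R\Phi_P(F)\bigr)[-d_Y]\;\cong\;R\sHom_{\O_Y}\bigl(R\Phi_P(F),\omega_Y\bigr),
\]
so that statement $(2)$ amounts to the vanishing $\sExt^{\,j}_{\O_Y}\bigl(R\Phi_P(F),\O_Y\bigr)=0$ for all $j<-k$. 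The remaining ingredient is then the homological lemma: for $E\in D_{coh}^b(Y)$ with $Y$ Cohen--Macaulay and $k\ge 0$, one has $\codim\bigl(\supp\mathcal{H}^i(E),Y\bigr)\ge i-k$ for all $i$ if and only if $\sExt^{\,j}_{\O_Y}(E,\O_Y)=0$ for all $j<-k$. I would prove this by induction on the amplitude of $E$ using the truncation triangles, the spectral sequence $\sExt^{p}_{\O_Y}\bigl(\mathcal{H}^{-q}(E),\O_Y\bigr)\Rightarrow\sExt^{p+q}_{\O_Y}(E,\O_Y)$, and the classical estimate on the Cohen--Macaulay variety $Y$ that $\codim\supp\sExt^{p}_{\O_Y}(M,\O_Y)\ge p$ for every coherent sheaf $M$, with $\sExt^{p}_{\O_Y}(M,\O_Y)\ne 0$ when $p=\codim\supp M$; combining this lemma with the displayed isomorphism yields $(1)\Leftrightarrow(2)$.

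Next, for $(2)\Leftrightarrow(3)$, fix a sufficiently ample line bundle $L$ on $Y$, so that (as recalled before the statement) $R^{d_Y}\Psi_P(L^{-1})$ is a locally free sheaf and $R\Psi_P(L^{-1})\cong R^{d_Y}\Psi_P(L^{-1})[-d_Y]$. Two applications of the projection formula give $R\Gamma\bigl(X,F\otimes^{\LL}R^{d_Y}\Psi_P(L^{-1})\bigr)\cong R\Gamma\bigl(Y,R\Phi_P(F)\otimes^{\LL}L^{-1}\bigr)[d_Y]$, so $(3)$ is equivalent to $\HH^{j}\bigl(Y,R\Phi_P(F)\otimes^{\LL}L^{-1}\bigr)=0$ for all $j>k+d_Y$ and all sufficiently ample $L$. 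Dualizing by Serre duality on the proper Cohen--Macaulay $Y$ and using the isomorphism $\DD_Y(R\Phi_P(F))\cong R\Phi_{P^\vee\otimes^{\LL}Lp_Y^*\omega_Y}(\DD_X F)[d_Y]$ from the previous step,
\[
\HH^{j}\bigl(Y,R\Phi_P(F)\otimes^{\LL}L^{-1}\bigr)^{\vee}\;\cong\;\HH^{\,d_Y-j}\bigl(Y,R\Phi_{P^\vee\otimes^{\LL}Lp_Y^*\omega_Y}(\DD_X F)\otimes^{\LL}L\bigr),
\]
so that $(3)$ becomes the vanishing of $\HH^{m}\bigl(Y,G\otimes^{\LL}L\bigr)$ for all $m<-k$ and all sufficiently ample $L$, where $G=R\Phi_{P^\vee\otimes^{\LL}Lp_Y^*\omega_Y}(\DD_X F)$. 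But for $L$ sufficiently ample (depending on $G$) the hypercohomology spectral sequence degenerates to $\HH^{m}(Y,G\otimes^{\LL}L)\cong H^0\bigl(Y,\mathcal{H}^m(G)\otimes L\bigr)$, and this group vanishes if and only if $\mathcal{H}^m(G)=0$; hence $(3)$ is equivalent to $\mathcal{H}^m(G)=0$ for all $m<-k$, which is precisely the assertion that $\DD_X(F)$ is $WIT_{\ge -k}$ with respect to $P^\vee\otimes^{\LL}Lp_Y^*\omega_Y$, i.e.\ $(2)$.

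The argument is structurally short, but it leans on two inputs whose proofs demand care rather than new ideas: the Grothendieck--Serre duality isomorphism above --- the only place the Cohen--Macaulay hypotheses on $X$ and $Y$ are used, since they are what make $\omega_{p_Y}^\bullet$ a shifted line bundle and let $\omega_Y$ (rather than $\omega_Y^\bullet$) appear in $(2)$ and $(3)$ --- and the homological lemma. I expect the first to be the main obstacle: keeping the base-change, projection-formula and duality identifications consistent, with every shift in its correct place, is the delicate part; the homological lemma, while standard, is the other point that requires attention to indexing, its converse direction using the non-vanishing of the extremal $\sExt$ sheaf.
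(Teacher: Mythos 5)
The paper itself offers no proof of this theorem: it is imported verbatim from Hacon and Pareschi--Popa, so the only comparison available is with those sources, and your sketch follows essentially the same route they take --- Grothendieck duality along $p_Y$ giving $R\Phi_{P^\vee\otimes^{\LL} Lp_Y^*\omega_Y}(\DD_X(F))\cong R\sHom_{\O_Y}(R\Phi_P(F),\omega_Y)$, a commutative-algebra lemma converting the codimension-of-support condition on $R\Phi_P(F)$ into vanishing of its low-degree $\sExt$ sheaves, and the sufficiently-ample-line-bundle argument (projection formula, Serre duality, Serre vanishing plus global generation) for condition (3). Your shifts and quantifiers check out.

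Two details need repair. First, in the homological lemma, and in the sentence just before it, $\O_Y$ must be replaced by $\omega_Y$: your own displayed isomorphism identifies condition (2) with the vanishing of $\sExt^j_{\O_Y}(R\Phi_P(F),\omega_Y)$ for $j<-k$, and the estimate $\codim\supp\sExt^p_{\O_Y}(M,\O_Y)\ge p$ you invoke is false on a Cohen--Macaulay variety that is not Gorenstein (there $\O_Y$ has infinite injective dimension), whereas for $\omega_Y$ it is the standard consequence of local duality; on the smooth $Y=\Pic^0(X)$ of the applications the distinction evaporates, but the theorem is stated for CM $Y$. Second, for the converse direction of the lemma the ``extremal $\sExt$'' argument is fragile as stated, since incoming differentials of the hyperext spectral sequence can kill the corner term $\sExt^{c}(\H^{i_0}(E),\omega_Y)$; the clean proof is by biduality: the hypothesis bounds the cohomological amplitude of $E'=R\sHom_{\O_Y}(E,\omega_Y^\bullet)$, and applying the same spectral-sequence estimate $\codim\supp\sExt^p_{\O_Y}(M,\omega_Y)\ge p$ to $E'$ together with $E\cong R\sHom_{\O_Y}(E',\omega_Y^\bullet)$ gives the codimension bounds on the $\H^i(E)$. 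With these adjustments your proposal is a correct proof, in line with the arguments of the cited references.
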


With these notations, the classical generic vanishing states that $\omega_X$ is $GV_{d_{a(X)}-d_X}$, or equivalently, $\O_X$ is $WIT_{\ge d_{a(X)}}$. 

\begin{corollary} \label{corollary: GV exact sequence}
    Let $F\to G\to H\xto{+1}$ be a distinguished triangle in $D_{coh}^b(X)$. Suppose $F$ is $GV_{-k}$ with respect to $P$ for some $k\ge 0$. Then $G$ is $GV_{-k}$ if and only if $H$ is $GV_{-k}$ with respect to the same perfect object $P$. 
\end{corollary}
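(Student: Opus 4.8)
The plan is to reduce the statement to the exactness of the integral functor $R\Phi_P$ as a functor of triangulated categories, together with two elementary facts about closed subsets of $Y$: if $Z\subseteq W$ then $\codim(Z,Y)\ge \codim(W,Y)$, and $\codim(Z_1\cup Z_2,Y)=\min\{\codim(Z_1,Y),\codim(Z_2,Y)\}$. (One could alternatively tensor the triangle with the locally free sheaf $R^{d_Y}\Psi_P(L^{-1})$ for $L$ sufficiently ample and argue via criterion~$(3)$ of \theoremref{theorem: GV and WIT equiv}; but that route imposes the Cohen--Macaulay hypotheses of that theorem, which the direct argument avoids.)

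First I would apply $R\Phi_P$ to the distinguished triangle $F\to G\to H\xto{+1}$. Since $R\Phi_P$ is exact, this produces a distinguished triangle $R\Phi_P(F)\to R\Phi_P(G)\to R\Phi_P(H)\xto{+1}$ in $D_{coh}^b(Y)$, and hence a long exact sequence of cohomology sheaves
\[\cdots \to R^i\Phi_P(F)\to R^i\Phi_P(G)\to R^i\Phi_P(H)\to R^{i+1}\Phi_P(F)\to \cdots.\]
Restricting this sequence of sheaves to an arbitrary open subset of $Y$ and reading off exactness at the terms $R^i\Phi_P(G)$ and $R^i\Phi_P(H)$, I obtain for every $i$ the support containments
\[\supp R^i\Phi_P(G)\subseteq \supp R^i\Phi_P(F)\cup \supp R^i\Phi_P(H),\]
\[\supp R^i\Phi_P(H)\subseteq \supp R^i\Phi_P(G)\cup \supp R^{i+1}\Phi_P(F).\]

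Now suppose $F$ is $GV_{-k}$, so that $\codim(\supp R^j\Phi_P(F),Y)\ge j-k$ for all $j$. If in addition $G$ is $GV_{-k}$, then for each $i$ the second containment yields $\codim(\supp R^i\Phi_P(H),Y)\ge \min\{i-k,\,(i+1)-k\}=i-k$, so $H$ is $GV_{-k}$. Conversely, if $H$ is $GV_{-k}$, then for each $i$ the first containment yields $\codim(\supp R^i\Phi_P(G),Y)\ge \min\{i-k,\,i-k\}=i-k$, so $G$ is $GV_{-k}$. The whole argument is formal; the only place requiring a moment's care is the shifted index in the term $R^{i+1}\Phi_P(F)$ that appears in the first implication, but this is harmless since the hypothesis on $F$ bounds that codimension below by $(i+1)-k\ge i-k$.
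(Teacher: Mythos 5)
Your proof is correct, but it takes a different route from the paper. The paper's own argument tensors the triangle with the locally free sheaf $R^{d_Y}\Psi_P(L^{-1})$ for $L$ sufficiently ample, obtains a long exact sequence of hypercohomology groups, and concludes via criterion $(3)$ of Theorem \ref{theorem: GV and WIT equiv}; the index shift you worry about at the end appears there too, in the term $\HH^{i+1}(X, F\otimes^{\LL} R^{d_Y}\Psi_P(L^{-1}))$. Your argument instead works straight from the definition of $GV_{-k}$: apply the exact functor $R\Phi_P$, take the long exact sequence of cohomology sheaves, and bound codimensions of supports using the two containments you derive (which are correct --- checking on stalks is the cleanest way to see them, and the union/monotonicity facts about codimension are standard). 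What your route buys is exactly what you note: it avoids invoking Theorem \ref{theorem: GV and WIT equiv}, hence needs neither the Cohen--Macaulay hypotheses of that theorem nor the small care of choosing one $L$ ample enough to serve simultaneously for $F$ and for whichever of $G$, $H$ is assumed $GV_{-k}$; it is purely formal in $D_{coh}^b$. What the paper's route buys is brevity given machinery already in place --- the same locally free kernel $R^{d_Y}\Psi_P(L^{-1})$ and criterion $(3)$ are the workhorses elsewhere in the section, and in all applications here $Y$ is an abelian variety, so the extra hypotheses are harmless. Both proofs are complete and either would serve.
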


\begin{proof}
    Since $R^{d_Y}\Psi_P(L^{-1})$ is locally free, tensoring by $R^{d_Y}\Psi_P(L^{-1})$ is exact; so we obtain a long exact sequence 
    \begin{multline*}
    \dots \to \HH^i(X, F \otimes^{\LL} R^{d_Y}\Psi_P(L^{-1})) \to \HH^i(X, G \otimes^{\LL} R^{d_Y}\Psi_P(L^{-1})) \to \\
    \to\HH^i(X, H \otimes^{\LL} R^{d_Y}\Psi_P(L^{-1})) \to 
    \HH^{i+1}(X, F \otimes^{\LL} R^{d_Y}\Psi_P(L^{-1}))\to \dots.
    \end{multline*}
    The statement now follows from Theorem \ref{theorem: GV and WIT equiv}. 
\end{proof}

\subsection{Generic vanishing of mixed Hodge modules on abelian varieties}

On an abelian variety $A$, any ample line bundle $L$ induces an isogeny $\phi_L: \hat A \to A$. This property was used in \cite{PS_generic_vanishing} to obtain $GV$ properties of mixed Hodge modules on $A$. Since $P_A^\vee = (1\times(-1))^*P_A$ for any abelian variety $A$, we will say $GV_{-k}$ or $WIT_{\ge -k}$ on $A$ without specifying the perfect object. 

\begin{theorem}[{\cite[Corollary 9.3]{PS_generic_vanishing}}] \label{theorem: GV for MHM}
    Let $A$ be an abelian variety, and $M$ a mixed Hodge module on $A$. Then we have $\gr^F_k(M)$ and $\gr^F_k \DR(M)$ are $GV$ on $A$ for any $k$. 
\end{theorem}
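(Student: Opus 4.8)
The plan is to reduce both assertions, via the duality criterion for generic vanishing (Theorem~\ref{theorem: GV and WIT equiv}) together with the behaviour of Fourier--Mukai transforms of line bundles under isogenies, to the Kodaira--Saito vanishing theorem. Write $g=d_A$ and $\hat A=\Pic^0(A)$; both $A$ and $\hat A$ are abelian, hence smooth and in particular Cohen--Macaulay, so Theorem~\ref{theorem: GV and WIT equiv} applies with $X=A$, $Y=\hat A$, $P=P_A$: an object $G\in D^b_{coh}(A)$ is $GV$ if and only if $\HH^i\bigl(A,\,G\otimes^{\LL}_{\O_A}R^{g}\Psi_{P_A}(L^{-1})\bigr)=0$ for all $i>0$ and all sufficiently ample line bundles $L$ on $\hat A$, where $R^{g}\Psi_{P_A}(L^{-1})$ is locally free. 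So I would prove this vanishing for $G=\gr^F_k\DR(M)$ and for $G=\gr^F_k M$.

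The engine is the isogeny trick. For $L$ ample on $\hat A$, Mukai's computation of Fourier--Mukai transforms of line bundles (cf.\ \cite{Mukai_transform}, \cite{Hacon-gen_vanishing}) provides an isogeny $\psi\colon\hat A\to A$ and an ample line bundle $L'$ on $\hat A$ with $\psi^{*}R^{g}\Psi_{P_A}(L^{-1})\cong (L')^{\oplus N}$ for some $N\geq 1$. An isogeny is finite and étale, so $\O_A$ is a direct summand of $\psi_{*}\O_{\hat A}$, whence by the projection formula $\HH^i(A,G\otimes^{\LL}V)$ is a direct summand of $\HH^i(\hat A,\psi^{*}(G\otimes^{\LL}V))$ for any $G$ and $V$; it is thus enough to kill the latter in degrees $i>0$. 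Since $\psi$ is étale, $\psi^{*}M\in MHM(\hat A)$, and the graded de Rham functor and the graded pieces of the underlying filtered $\mathcal D$-module are compatible with $\psi^{*}$, i.e.\ $\psi^{*}\gr^F_k\DR(M)\cong\gr^F_k\DR(\psi^{*}M)$ and $\psi^{*}\gr^F_k M\cong\gr^F_k(\psi^{*}M)$. Taking $G=\gr^F_k\DR(M)$, I am reduced to $\HH^i\bigl(\hat A,\,\gr^F_k\DR(\psi^{*}M)\otimes L'\bigr)=0$ for $i>0$, which is exactly the Kodaira--Saito vanishing theorem for the mixed Hodge module $\psi^{*}M$ on the smooth projective variety $\hat A$ and the ample bundle $L'$. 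Hence $\gr^F_k\DR(M)$ is $GV$ for every $k$.

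For the coherent sheaves $\gr^F_k M$ I would exploit that $T_A$ is trivial: $\gr^F_\bullet\mathcal D_A\cong\O_A[\xi_1,\dots,\xi_g]$ is a polynomial $\O_A$-algebra, $\gr^F_\bullet M$ is a finitely generated graded module over it — equivalently a coherent sheaf on the cotangent bundle $T^{*}A=A\times\AA^{g}$, conical in the $\AA^{g}$-direction — and $\gr^F_k\DR(M)$ is precisely the degree-$k$ component of the Koszul complex of $\gr^F_\bullet M$ on $\xi_1,\dots,\xi_g$. Running $R\Phi_{P_A}$ relatively over $\AA^{g}$ and comparing Koszul homologies, the $GV$ property of all the $\gr^F_k\DR(M)$ — established above — translates into a concentration statement for the Fourier--Mukai transform of $\gr^F_\bullet M$ over $\hat A\times\AA^{g}$, which in turn forces each $\gr^F_k M$ to be $GV$ on $A$; this is the argument of \cite{PS_generic_vanishing}. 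Here one also uses the base-change compatibility of integral functors from Section~\ref{section: integral functors} and the behaviour of $GV$ objects in triangles from Corollary~\ref{corollary: GV exact sequence}.

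The essential input is Kodaira--Saito vanishing; the other ingredients (compatibility of $\gr^F\DR$ and of the graded pieces with étale pullback, and the Mukai identity for $\psi^{*}R^{g}\Psi_{P_A}(L^{-1})$) are standard. I expect the genuinely delicate point to be the passage from the de Rham complexes to their top cohomology sheaves $\gr^F_k M$: the individual graded pieces need not satisfy Kodaira--Saito vanishing, and a naive induction on $k$ via the stupid filtration of the Koszul complex fails, since a $GV$-sheaf placed in cohomological degree $-j$ is only $GV_{j}$ rather than $GV$, so Corollary~\ref{corollary: GV exact sequence} cannot be applied term by term. One must instead control all $k$ at once through the conical structure of $\gr^F_\bullet M$ on the cotangent bundle, and this is the technical heart of the argument.
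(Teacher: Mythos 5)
You should first note that the paper does not prove this statement at all: it is quoted verbatim from \cite{PS_generic_vanishing}*{Corollary 9.3}, so the only meaningful comparison is with the argument in that reference. Your first half reproduces that argument faithfully and correctly: the criterion of Theorem \ref{theorem: GV and WIT equiv}(3), Mukai's computation of $\psi^*R^{g}\Psi_{P_A}(L^{-1})$ for the isogeny $\psi$ attached to an ample $L$ on $\hat A$, the splitting of $\O_A\to\psi_*\O_{\hat A}$ for a finite \'etale cover, the compatibility of $\gr^F_k\DR$ with \'etale pullback of mixed Hodge modules, and Saito's vanishing theorem on $\hat A$. This establishes that $\gr^F_k\DR(M)$ is $GV$ for every $k$, exactly as in the cited proof.

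The gap is in the second assertion, about the individual graded pieces $\gr^F_kM$: you only gesture at a ``concentration statement'' for a relative Fourier--Mukai transform over $\hat A\times\AA^{g}$ and assert that it ``forces'' each $\gr^F_kM$ to be $GV$, which is not an argument. Moreover, the reason you give for rejecting the direct induction is wrong in sign. With the conventions of this paper, $\gr^F_k\DR(M)=[\gr^F_{k-g}M\otimes\wedge^{g}T_A\to\cdots\to\gr^F_kM]$ lives in degrees $-g,\dots,0$, so (using $T_A\cong\O_A^{\oplus g}$) every term other than $\gr^F_kM$ is a sum of sheaves $\gr^F_{k-j}M$ placed in \emph{negative} degrees, i.e.\ with \emph{nonnegative} shifts; by Remark \ref{GV and WIT with shift}, a $GV$-sheaf shifted by $[j]$ with $j\ge 0$ is $GV_{+j}$, which is stronger than $GV$, not weaker. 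Hence the naive induction on $k$ does close: for the minimal $k$ with $F_kM\neq 0$ one has $\gr^F_k\DR(M)=\gr^F_kM$; in general, set $C=\gr^F_k\DR(M)$ and $D=\sigma_{\le-1}C[-1]$ (stupid truncation), whose terms are $(\gr^F_{k-j}M)^{\oplus\binom{g}{j}}[j-1]$ for $1\le j\le g$, each $GV_{j-1}$ by the inductive hypothesis, so that $\HH^{i}(A,D\otimes^{\LL}R^{g}\Psi_{P_A}(L^{-1}))=0$ for $i>0$ and $D$ is $GV$ by Theorem \ref{theorem: GV and WIT equiv}(3); applying Corollary \ref{corollary: GV exact sequence} to the triangle $D\to\gr^F_kM\to C\xto{+1}$, with $D$ and $C$ both $GV$, gives that $\gr^F_kM$ is $GV$. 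This inductive exploitation of the triviality of $T_A$ is essentially how the cited reference passes from the de Rham complexes to the graded pieces; your proposed detour through the conical module on $T^{*}A$ is both unnecessary and, as written, unsubstantiated, so the second half of the theorem is not actually proved in your proposal.
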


Note that $\bD_A(M)$ is still a mixed Hodge module on $A$; thus we can apply Theorem \ref{theorem: GV for MHM} to $\bD_A(M)$ to obtain $\DD_A(\gr^F_{k} \DR(M)) \cong \gr^F_{-k} \DR(\bD_A(M))$ is $GV$ on $A$. By Theorem \ref{theorem: GV and WIT equiv}, this means $\gr^F_k \DR(M)$ is both $GV$ and $WIT_{\ge 0}$ on $A$ for any $k$. 
With a simple argument, we obtain $GV$ properties for objects in $D^bMHM(A)$ based on their cohomological degrees.

\begin{corollary} \label{corollary: GV for D^bMHM}
    If $M^\bullet \in D^bMHM(A)$ with $H^i(M^\bullet) = 0$ for $i<-a$ or $i>b$ for non-negative integers $a$ and $b$, then $\gr^F_{k} \DR(M^\bullet)$ is $GV_{-b}$ and $\DD_A(\gr^F_k \DR(M^\bullet))$ is $GV_{-a}$ on $A$.  
\end{corollary}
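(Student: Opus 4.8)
The plan is to reduce to the case of a single mixed Hodge module --- which is precisely Theorem~\ref{theorem: GV for MHM} --- by a d\'evissage along the cohomological truncation of $M^\bullet$ in $D^bMHM(A)$. The only additional ingredients required are that $\gr^F_k\DR$ is a triangulated functor commuting with shifts, the shift rule for $GV_{-k}$ recorded in Remark~\ref{GV and WIT with shift}, the behaviour in triangles from Corollary~\ref{corollary: GV exact sequence} (applicable since $A$ is abelian, hence Cohen--Macaulay), and --- for the dual assertion --- the identity $\DD_A\circ\gr^F_k\DR\cong\gr^F_{-k}\DR\circ\bD_A$ from Proposition~\ref{proposition: grDR and f_* commute}.

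For the first assertion I would apply $\gr^F_k\DR$ to the canonical truncation triangles
\[
\tau_{\le i-1}M^\bullet \longrightarrow \tau_{\le i}M^\bullet \longrightarrow H^i(M^\bullet)[-i]\xto{+1}
\]
in $D^bMHM(A)$, producing distinguished triangles in $D_{coh}^b(A)$ for $-a\le i\le b$. Since each $H^i(M^\bullet)$ is a mixed Hodge module on $A$, Theorem~\ref{theorem: GV for MHM} gives that $\gr^F_k\DR(H^i(M^\bullet))$ is a $GV$-object, and hence its shift $\gr^F_k\DR(H^i(M^\bullet)[-i])$ is $GV_{-i}$ by Remark~\ref{GV and WIT with shift}; as $i\le b$ this is in particular $GV_{-b}$. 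I would then induct on $i$ from $-a$ up to $b$: the base term $\gr^F_k\DR(\tau_{\le -a}M^\bullet)\cong\gr^F_k\DR(H^{-a}(M^\bullet)[a])$ is $GV_{-b}$ by the previous sentence, and at each step Corollary~\ref{corollary: GV exact sequence} applied to the triangle above --- its left term $GV_{-b}$ by the inductive hypothesis and its right term $GV_{-b}$ as above --- forces the middle term $\gr^F_k\DR(\tau_{\le i}M^\bullet)$ to be $GV_{-b}$. Taking $i=b$ gives that $\gr^F_k\DR(M^\bullet)$ is $GV_{-b}$.

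For the dual assertion I would rewrite $\DD_A(\gr^F_k\DR(M^\bullet))\cong\gr^F_{-k}\DR(\bD_A M^\bullet)$ using Proposition~\ref{proposition: grDR and f_* commute}. Because $\bD_A$ exchanges cohomological degrees on $D^bMHM(A)$ (it is compatible with Verdier duality via $\mathrm{rat}$), we have $H^i(\bD_A M^\bullet)\cong\bD_A(H^{-i}(M^\bullet))$, so $\bD_A M^\bullet$ is concentrated in cohomological degrees $[-b,a]$. Applying the first assertion to $\bD_A M^\bullet$, with the roles of $a$ and $b$ interchanged, shows that $\gr^F_{-k}\DR(\bD_A M^\bullet)$ is $GV_{-a}$, as wanted.

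I do not anticipate a genuine obstacle: all the substance is already contained in Theorem~\ref{theorem: GV for MHM} (which itself rests on the isogenies $\phi_L$ on $A$), and the rest is a formal d\'evissage. The only points needing care are the bookkeeping of the index in $GV_{-k}$ under the shift $[-i]$ and under $\bD_A$, and verifying that the hypotheses of Corollary~\ref{corollary: GV exact sequence} ($X=Y=A$ Cohen--Macaulay, $P=P_A$ perfect) hold at each stage of the induction.
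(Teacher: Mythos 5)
Your proposal is correct and follows essentially the same route as the paper: reduce the dual statement to the first via $\DD_A(\gr^F_k\DR(M^\bullet))\cong\gr^F_{-k}\DR(\bD_A M^\bullet)$, then d\'evissage on truncation triangles using Theorem \ref{theorem: GV for MHM} for each cohomology module, the shift rule of Remark \ref{GV and WIT with shift}, and Corollary \ref{corollary: GV exact sequence}. The only cosmetic difference is that the paper first shifts to assume $a=0$ and inducts on the top degree $b$, whereas you run the induction upward over all truncations $\tau_{\le i}$; the bookkeeping is equivalent.
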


\begin{proof}
    Since $\DD_A(\gr^F_k \DR(M^\bullet)) \cong \gr^F_{-k} \DR(\bD_A(M^\bullet))$ and $\H^i(\bD_A(M^\bullet)) = 0$ for $i<-b$ or $i>a$, it suffices to show that $\gr^F_k \DR(M^\bullet)$ is $GV_{-b}$. By considering $M^\bullet[-a]$, we can assume $a=0$. With this assumption, we prove the first statement by induction on $b$. The base case, $b=0$, is exactly Theorem \ref{theorem: GV for MHM}. If $b>0$, consider the distinguished triangle
    \[\gr^F_{k} \DR(\tau^{<b}M^\bullet) \to \gr^F_{k} \DR(M^\bullet) \to \gr^F_{k} \DR(H^b(M^\bullet))[-b] \xto{+1}.\]
    By induction hypothesis, we have $\gr^F_{k} \DR(\tau^{<b} M^\bullet)$ is $GV_{-b+1}$ and hence is $GV_{-b}$ on $A$. Moreover, the last term $\gr^F_{k} \DR(H^b(M^\bullet))[-b]$ is also $GV_{-b}$; thus Corollary \ref{corollary: GV exact sequence} implies that $\gr^F_{k} \DR(M^\bullet)$ is $GV_{-b}$ on $A$, as we want.
\end{proof}

Now, let's briefly recall Hacon's proof of the classical generic vanishing theorem. Since 
\[Ra_* \omega_X \cong \oplus_i R^i a_* \omega_X[-i]\]
By Corollary \ref{corollary: GV for D^bMHM}, it is enough to show that $R^i a_* \omega_X = 0$ if $i > d_{X} - d_{a(X)}$, which follows from \Kollar{}'s torsion-freeness theorem that $R^i a_* \omega_X$ is torsion-free. Analogously, we have

\begin{corollary} \label{corollary: GV of lowest Hodge filtration of pure HM}
    Let $f: X\to A$ be a projective morphism to an abelian variety and $M \in MH_X(X, w)^p$ a polarizable Hodge module of pure weight $w$ with strict support $X$. Then $Rf_* \gr^F_{p(M)} \DR(M)$ is $GV_{d_{f(X)}-d_X}$. 
\end{corollary}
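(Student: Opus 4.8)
The plan is to follow the pattern of Hacon's proof of the classical generic vanishing theorem recalled just above, with $\omega_X=\gr^F_{-d_X}\DR(\QQ^H_X[d_X])$ replaced by $\gr^F_{p(M)}\DR(M)$ and Koll\'ar's torsion-freeness replaced by Saito's torsion-freeness theorem. First I would invoke Theorem \ref{theorem: saito torsion-freeness}: since $f$ is projective and $M$ has strict support $X$, writing $H^if_*M=\bigoplus_Z M^i_Z$ for the decomposition by strict support, it gives
\[Rf_*\gr^F_{p(M)}\DR(M)\;\cong\;\bigoplus_{i\,:\,p(M^i_{f(X)})=p(M)}\gr^F_{p(M)}\DR(M^i_{f(X)})[-i].\]
For each index $i$ in this sum, the equality $p(M^i_{f(X)})=p(M)$ says precisely that $p(M)$ is the lowest index of the Hodge filtration of $M^i_{f(X)}$; working on the smooth variety $A$, every term but the one in degree $0$ of $\gr^F_{p(M)}\DR(M^i_{f(X)})$ then vanishes, so this complex reduces to the single coherent sheaf $\gr^F_{p(M)}M^i_{f(X)}$ placed in degree $0$, which by Theorem \ref{theorem: GV for MHM} is a $GV$-sheaf on $A$.

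Next I would bound the indices $i$ appearing in the sum, claiming that each satisfies $|i|\le e\coloneqq d_X-d_{f(X)}$; this is the analogue of the torsion-freeness step. By upper semicontinuity of fibre dimension there is a dense open $U\subseteq f(X)$ over which every fibre of $f$ has dimension $\le e$; put $V=f^{-1}(U)$ and $g=f|_V$. Since $f$ is proper, base change (Proposition \ref{proposition: base change and commute with dual}) gives $(f_*M)|_U\cong g_*(M|_V)$, and because $g$ is proper with fibres of dimension $\le e$ the pushforward $Rg_*$ has perverse amplitude $[-e,e]$; as $\mathrm{rat}(M)|_V$ is perverse and $\mathrm{rat}$ is faithful and $t$-exact, this forces $(H^if_*M)|_U=0$ for $|i|>e$. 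But $M^i_{f(X)}$ has strict support $f(X)\supseteq U$, hence is nonzero as soon as its restriction to the dense open $U$ is; therefore $M^i_{f(X)}=0$ whenever $|i|>e$.

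To conclude I would combine the two steps. For all $j$, $R^j\Phi$ of $Rf_*\gr^F_{p(M)}\DR(M)$ is the direct sum of the sheaves $R^{j-i}\Phi\bigl(\gr^F_{p(M)}M^i_{f(X)}\bigr)$ over the finitely many indices $i$ in the sum, each of which satisfies $i\le e$ by Step 2; since each $\gr^F_{p(M)}M^i_{f(X)}$ is $GV_0$, the support of $R^{j-i}\Phi\bigl(\gr^F_{p(M)}M^i_{f(X)}\bigr)$ has codimension $\ge j-i\ge j-e$, and taking the union over $i$ gives $\codim\supp R^j\Phi\bigl(Rf_*\gr^F_{p(M)}\DR(M)\bigr)\ge j-e$, i.e. $Rf_*\gr^F_{p(M)}\DR(M)$ is $GV_{-e}=GV_{d_{f(X)}-d_X}$. (Equivalently, each summand $\gr^F_{p(M)}M^i_{f(X)}[-i]$ is $GV_{-i}$, hence $GV_{d_{f(X)}-d_X}$ as $i\le e$, and one finishes with Corollary \ref{corollary: GV exact sequence}.)

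The only real point is the index bound of the middle step: one must use the \emph{generic} fibre dimension $e$ of $a$ rather than its maximum, and the reason this is legitimate is exactly that Saito's torsion-freeness theorem guarantees that the only summands of $H^if_*M$ contributing to $\gr^F_{p(M)}\DR$ are those with full strict support $f(X)$, which are already detected over a dense open of $f(X)$ on which the fibre dimension equals $e$. Everything else is a formal manipulation of the results already recorded.
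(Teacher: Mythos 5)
Your proof is correct and follows essentially the same route as the paper: Saito's torsion-freeness theorem (Theorem \ref{theorem: saito torsion-freeness}) to reduce to the summands $M^i_{f(X)}$ with strict support $f(X)$ and lowest Hodge index $p(M)$, the generic vanishing theorem for Hodge modules on abelian varieties (Theorem \ref{theorem: GV for MHM}, Corollary \ref{corollary: GV for D^bMHM}) applied to each summand, and the bound $i\le d_X-d_{f(X)}$ on the contributing cohomological degrees. The only difference is that you spell out the proof of that index bound (generic fibre dimension plus the fact that an object with strict support $f(X)$ is detected on a dense open), which the paper asserts without proof, and you add an unnecessary but harmless reduction of $\gr^F_{p(M)}\DR(M^i_{f(X)})$ to a single sheaf in degree zero.
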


\begin{proof}
    By Theorem \ref{theorem: saito torsion-freeness}, we have 
    \[Rf_* \gr^F_{p(M)} \DR (M) \cong \bigoplus_{p(M^i_{f(X)}) = p(M)} \gr^F_{p(M)} \DR (M^i_{f(X)})[-i].\]
    Note that if $H^if_* M$ contains $M^i_{f(X)}$ as a direct summand, then $i\le d_X - d_{f(X)}$. Thus $\gr^F_{p(M)} \DR (M^i_{f(X)})[-i]$ is $GV_{d_{f(X)}-d_X}$ by Corollary \ref{corollary: GV for D^bMHM}, which implies $Rf_* \gr^F_{p(M)} \DR(M)$ is $GV_{d_{f(X)}-d_X}$.
\end{proof}

\medskip

\section{Generalization of classical generic vanishing for singular varieties}

Before discussing the main theorems, Corollary \ref{corollary: GV for D^bMHM} suggests that we need to study the cohomological amplitude of direct image of $\QQ_X[d_X]$ and $\IC_X\QQ^H$. 

\subsection{(Stratified) semismallness defect}

The semismallness defect is an important invariant of singularities that controls the generic vanishing of Nakano type \cite{PS_generic_vanishing}. This explains the counter-example of Green and Lazarsfeld that the index in the generic Nakano vanishing theorem is not equal to the dimension of the generic fiber of the Albanese mapping (see \cite{PS_generic_vanishing}*{Example 12.3}). When $X$ is smooth, this invariant is studied by de Cataldo and Migliorini (see \cite[Remark 2.1.2]{CM-semismalldefect} and \cite[Proposition 11.2]{PS_generic_vanishing})

\begin{definition} [{\cite[Definition 4.7.2]{CM-semismalldefect}}] \label{definition: semismall-defect}
    The \textit{semismallness defect} of a morphism $a:X\to A$ is 
    \[\delta(a) \coloneqq \max_{l\in \NN} \{2l-\dim(X) + \dim(A_l)\}\]
    where $A_l = \{y\in A | \dim a^{-1}(y)\ge l \}$.
\end{definition}

Their result says that 
\[\delta(a) = \max \{H^i a_* \QQ^H_X[d_X] \neq 0\} = -\min \{H^i a_* \QQ^H_X[d_X] \neq 0\}\] in which the second equality is due to the self-duality of $\QQ^H_X[d_X] \cong \IC_X\QQ^H$, given the smoothness of $X$. By using this characterization, Popa and Schnell show in \cite[Theorem 3.2]{PS_generic_vanishing} that 
\[\Omega^p_X \text{ is } GV_{p-d_X-\delta(a)} \text{ for every } p\in \NN\]
or equivalently, 
\[\codim V^q(\Omega^p_X) \ge p+q-d_X -\delta(a)\] 
for every $p,q\in \NN$, and there exist $p$ and $q$ for which the equality holds.  

When $X$ is singular, the semismallness defect no longer controls the cohomological amplitude of $a_* \QQ^H_X[d_X]$. For example, consider an embedding of $X$ into an abelian variety $A$, in which case $\delta(a) = 0$ and $\lcdef(X) > 0$, but we have seen in Theorem \ref{theorem: cohom range of Q_X} that $H^{-\lcdef(X)} \QQ^H_X[d_X] \neq 0$. The issue is that $\delta(a)$ does not take into account the interaction of the singularity of $X$ and the morphism $a:X \to A$. To remedy this, we consider Whitney stratifications of $X$, which stratify $X$ into smooth \quotes{equisingular loci} (see \cite[Definition E.3.7]{HTT} for precise definition). In \cite{min-whitney-strata}, Teissier proves the existence of a minimal Whitney stratification $X = \bigsqcup_{s\in \sS} X_s$, where $\sS$ is the index set. In other words, any Whitney stratification of $X$ is a refinement of this minimal one. This gives us a canonical choice of Whitney stratification. 

\begin{definition} \label{definition: stratified semismall defect}
    The \textit{stratified semismallness defect} of a morphism $a: X\to A$ is 
    \[\delta_s(a) \coloneqq \max_{s\in \sS} \{\delta(a|_{X_s})\}\]
    where $X = \bigsqcup_{s\in \sS} X_s$ is the minimal Whitney stratification of $X$. 
\end{definition}

Clearly, if $X$ is smooth, we have $\delta_s(a) = \delta(a)$ since $X$ itself is the minimal Whitney stratification. The following Proposition allows us to control the cohomological amplitude of the direct image of constructible sheaves with respect to the minimal  Whitney stratification. 

\begin{proposition} \label{proposition: stratified semismall defect bound}
    Let $a: X\to A$ be a morphism and $X = \bigsqcup_{s\in \sS} X_s$ the minimal Whitney stratification of $X$. Then for any $F\in D^b_{\sS}(X)$ i.e., $F|_{X_s}$ has locally constant cohomology sheaves (of finite type) for all $s\in \sS$ (see \cite[Definition 2.3.7]{Achar-pervers-sheaves}), we have 
    \begin{enumerate}
        \item If $F\in \Dp^b_{\sS}(X)^{\le 0}$, then $a_! F \in \Dp^b_{\sS}(X)^{\le \delta_s(a)}$.
        \item If $F\in \Dp^b_{\sS}(X)^{\ge 0}$, then $a_* F \in \Dp^b_{\sS}(X)^{\ge -\delta_s(a)}$.
    \end{enumerate}
\end{proposition}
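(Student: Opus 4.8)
The plan is to prove both statements by induction on the number of strata, starting with a deep stratum. Statements (1) and (2) are interchanged by Verdier duality: since $\mathbb{D}_X$ swaps $\mathrm{rat}$-level $!$ and $*$ pushforward, exchanges $\Dp{}^{\le 0}$ and $\Dp{}^{\ge 0}$, and preserves $\sS$-constructibility (the minimal Whitney stratification is self-dual), and since $\delta_s(a)$ is manifestly invariant under all of this, it suffices to prove (2), say, and then apply $\mathbb{D}$. So I would focus on (1): if $F \in \Dp^b_{\sS}(X)^{\le 0}$ then $a_! F \in \Dp^b_{\sS}(A)^{\le \delta_s(a)}$.

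First I would recall the smooth semismall case, which is the engine: if $U$ is smooth and $g: U \to A$ is a morphism, and $L$ is a local system on $U$ in degree $0$ (so $L[d_U] \in \Dp{}^{\le 0}$), then the classical estimate on fibre dimensions gives $g_!(L[d_U]) \in \Dp{}^{\le \delta(g)}$; this is exactly the content of \cite[Remark 2.1.2]{CM-semismalldefect} / \cite[Proposition 11.2]{PS_generic_vanishing}, and for a general $F$ supported in perverse degrees $\le 0$ on a single smooth stratum it follows by the long exact sequence and the fact that $g_!$ is right $t$-exact up to the shift $\delta(g)$. More precisely, on a smooth connected $U$, the perverse $t$-structure is the constant-sheaf $t$-structure shifted by $d_U$, so $F|_U \in \Dp{}^{\le 0}$ means $\mathcal{H}^j(F|_U) = 0$ for $j > -d_U$; then the hypercohomology spectral sequence for $g_!$ together with $\dim\{$fibre $\ge \ell\} \le \dim A - 2\ell + (\text{something})$ packaged into $\delta(g)$ gives $g_! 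F|_U \in \Dp{}^{\le \delta(g)} \subseteq \Dp{}^{\le \delta_s(a)}$, using $\delta(a|_U) \le \delta_s(a)$ when $U$ is one of the minimal strata.

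Now the induction. Order the strata $X_{s_1}, \dots, X_{s_n}$ so that $Z := \overline{X_{s_1}}$ is a closed stratum (a minimal one in the closure order), with open complement $j: W = X \setminus Z \hookrightarrow X$ and closed immersion $i: Z \hookrightarrow X$. Given $F \in \Dp^b_{\sS}(X)^{\le 0}$, apply the triangle $j_! j^* F \to F \to i_* i^* F \xrightarrow{+1}$ from the excerpt, and then $a_!$ to get $a_!(j_! j^* F) \to a_! F \to (a i)_! (i^* F) \xrightarrow{+1}$ (using $a_! i_* = a_! i_! = (ai)_!$ for the closed immersion $i$, and $a_! j_! = (aj)_!$). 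Write $a_W = a|_W: W \to A$ and $a_Z = a|_Z: Z \to A$. Both $j^* F$ and $i^* F$ lie in $\Dp{}^{\le 0}$ (the restriction functors $j^*$ and $i^*$ are right $t$-exact for the perverse $t$-structure), and they are constructible with respect to the induced stratifications of $W$ and $Z$, which are again the minimal Whitney stratifications (this is where minimality is used: the minimal Whitney stratification of $X$ restricts to the minimal ones on a union of strata and its complement). Since $Z$ is a single smooth stratum, the base case gives $(a_Z)_! (i^* F) \in \Dp{}^{\le \delta(a_Z)} \subseteq \Dp{}^{\le \delta_s(a)}$. Since $W$ has strictly fewer strata, and since $\delta_s(a_W) \le \delta_s(a)$ (the strata of $W$ are among those of $X$), the induction hypothesis gives $(a_W)_!(j^* F) \in \Dp{}^{\le \delta_s(a_W)} \subseteq \Dp{}^{\le \delta_s(a)}$. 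Because $\Dp{}^{\le \delta_s(a)}$ is closed under extensions, the triangle forces $a_! F \in \Dp{}^{\le \delta_s(a)}$, and its cohomology sheaves are $\sS$-constructible on $A$ (really: constructible with respect to the stratification of $A$ refining the images; I would phrase the conclusion as $a_! F$ being constructible, which is automatic since $a$ is a morphism of varieties and $F$ is constructible). This completes (1), and then (2) follows by applying $\mathbb{D}_A \circ (1) \circ \mathbb{D}_X$.

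The main obstacle is the base case on a single stratum: one must be careful that the fibre-dimension estimate defining $\delta(g)$ really does bound the perverse amplitude of $g_!$ for an \emph{arbitrary} complex with locally constant cohomology on a smooth variety, not just for a shifted local system. This is handled by reducing to local systems via the (finite) filtration of $F|_U$ by its cohomology sheaves and the fact that $\Dp{}^{\le \delta(g)}$ is triangulated-closed under extensions, but one should state cleanly the inequality $\dim g^{-1}(y) \le \tfrac{1}{2}(d_U - \delta(g) + \text{codim of the relevant }A_\ell)$ that makes the hypercohomology spectral-sequence bound work; alternatively cite \cite[Remark 2.1.2]{CM-semismalldefect} directly for perverse right-exactness up to shift $\delta$. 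A secondary point requiring care is the claim that restricting the minimal Whitney stratification to a closed union of strata and to the open complement again yields minimal Whitney stratifications — this is true but deserves a one-line justification (any Whitney stratification of $X$ restricts to Whitney stratifications of $Z$ and $W$, and minimality is inherited because a strictly coarser Whitney stratification of, say, $W$ would glue with the strata in $Z$ to a strictly coarser one of $X$). Neither of these is deep, but both are the kind of thing a referee will want spelled out.
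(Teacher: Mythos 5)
Your argument is correct and is essentially the paper's proof: both reduce, via the recollement triangles over the strata, to the fibre-dimension estimate for $g_!$ of a shifted local system on a smooth stratum (the paper's Lemma on cohomological amplitude, i.e.\ the de Cataldo--Migliorini bound), and conclude because $\Dp^{b}_c(A)^{\le \delta_s(a)}$ is closed under extensions. The only cosmetic differences are that you deduce (2) from (1) by Verdier duality on the singular $X$ whereas the paper runs the parallel argument directly with generators $j_{s*}L[m]$, $m\le \dim X_s$, and that your concern about minimality of the induced stratifications is unnecessary (and your gluing justification is shaky): the induction only needs that the induced stratifications of $W$ and $Z$ are Whitney and that their strata are strata of $X$, so that $\delta(a|_{X_s})\le \delta_s(a)$.
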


Before proving this proposition, let us consider the following lemma:

\begin{lemma} \label{lemma: coh amp of image of loc sys}
    Let $a:X \to A$ be a morphism from $X$ to $A$, where $X$ is smooth. For any local system $L$ on $X$, we have
    \begin{enumerate}
        \item $\prescript{p}{}{\H}^k a_! L[d_X] = 0$ for $k > \delta(a)$.
        \item $\prescript{p}{}{\H}^k a_* L[d_X] = 0$ for $k < -\delta(a)$.
    \end{enumerate}
\end{lemma}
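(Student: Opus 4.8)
The plan is to prove (1) directly by a support–dimension estimate and then deduce (2) from (1) by Verdier duality; the two parts are essentially a single argument.

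\emph{Reduction of (2) to (1).} Since $X$ is smooth of dimension $d_X$, its dualizing complex is $\omega_X^\bullet\cong\underline{\QQ}_X[2d_X]$, so $\bD_X(L[d_X])\cong L^\vee[d_X]$, where $L^\vee$ is the dual local system. Verdier duality gives $\bD_A\circ a_*\cong a_!\circ\bD_X$ on constructible complexes (compare \propositionref{proposition: base change and commute with dual}); together with the facts that $\bD_A$ exchanges $\Dp^{\le 0}(A)$ and $\Dp^{\ge 0}(A)$ and that $\prescript{p}{}{\H}^k\circ\bD_A\cong\bD_A\circ\prescript{p}{}{\H}^{-k}$, I obtain
\[\prescript{p}{}{\H}^k\big(a_*L[d_X]\big)\ \cong\ \bD_A\!\left(\prescript{p}{}{\H}^{-k}\big(a_!L^\vee[d_X]\big)\right).\]
Hence statement (1) applied to the local system $L^\vee$ gives statement (2) for $L$, and it remains to prove (1).

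\emph{Proof of (1).} By the support condition for the perverse $t$-structure, the vanishing $\prescript{p}{}{\H}^k(a_!L[d_X])=0$ for $k>\delta(a)$ is equivalent to $a_!L[d_X]\in\Dp^{\le\delta(a)}(A)$, i.e. to the inequalities
\[\dim\supp\,\H^j(a_!L)\ \le\ d_X+\delta(a)-j\qquad\text{for every }j,\]
where $\H^j(a_!L)$ denotes the ordinary cohomology sheaves of $a_!L$. By the base change theorem for $a_!$, the stalk of $\H^j(a_!L)$ at $y\in A$ is $H^j_c\big(a^{-1}(y),\,L|_{a^{-1}(y)}\big)$. Since a complex algebraic variety $Z$ satisfies $H^j_c(Z,G)=0$ for every constructible sheaf $G$ and every $j>2\dim Z$, nonvanishing of this stalk forces $\dim a^{-1}(y)\ge\lceil j/2\rceil$, that is, $y\in A_{\lceil j/2\rceil}$. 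Therefore $\supp\,\H^j(a_!L)\subseteq A_{\lceil j/2\rceil}$, and by the definition of $\delta(a)$ applied with $l=\lceil j/2\rceil$ (using $\dim\emptyset=-\infty$ when the locus is empty),
\[\dim\supp\,\H^j(a_!L)\ \le\ \dim A_{\lceil j/2\rceil}\ \le\ \delta(a)+d_X-2\lceil j/2\rceil\ \le\ \delta(a)+d_X-j.\]
This is exactly the bound we need, so (1) follows.

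\emph{Main difficulty.} There is no substantial obstacle here: the argument uses only the base change theorem for $a_!$, the elementary vanishing of compactly supported cohomology in degrees above twice the complex dimension, and the translation of these estimates through the perverse $t$-structure. The only points that require care are keeping the perversity shift conventions consistent and checking that the formula for $\delta(a)$ is symmetric enough that the Verdier-duality step passes from (1) to (2) without introducing a different invariant.
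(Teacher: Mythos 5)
Your proposal is correct and follows essentially the same route as the paper: estimate the stalks of $\H^j(a_!L)$ via base change as compactly supported cohomology of fibers, use the vanishing above twice the fiber dimension together with the loci $A_l$ defining $\delta(a)$ to verify the perverse support condition, and deduce (2) from (1) by Verdier duality using smoothness of $X$. The only cosmetic difference is that you check the support-dimension form of $\Dp^{\le \delta(a)}$ directly, whereas the paper phrases the same estimate stratum by stratum on $A$.
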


\begin{proof}
    Consider a stratification $A = \bigsqcup_{t\in \sT} A_t$ with embeddings $i_{A_t}: A_t \to A$ such that $i^{-1}_{A_t} a_!L$ have locally constant cohomology sheaves and all fibers of $a$ over $A_t$ have the same dimension. By \cite[Proposition 8.1.22(i)]{HTT}, the first statement is equivalent to
    \[\H^j (i^{-1}_{A_t} a_! L[d_X]) = 0\]
    for any $t\in \sT$ and $j > -d_{A_t} + \delta(a)$. Note that each stratum $A_t$ is contained in some $A_d \coloneqq \{p\in A| \dim a^{-1}(p) = d\}$. Since pullback is exact, it is enough to show that 
    \[(\H^j a_! L[d_X])_p = 0\]
    for any $p\in A_t$ and $j > -d_{A_t} +\delta(a)$. By base change theorem (Proposition \ref{proposition: base change and commute with dual}), we get
    \[(\H^j a_! L[d_X])_p \cong H^{j+d_X}_c(a^{-1}(p), L) = 0\]
    for $j> 2\dim a^{-1}(p) -d_X=2d-d_X$. By the definition of the semismall defect, we have $2d - d_X \le  -d_{A_t} + \delta(a)$, which concludes the proof.

    Since $X$ is smooth, the second statement follows from the first one by Verdier duality. 
\end{proof}

\begin{remark}
    \begin{enumerate}
        \item When $X$ is smooth and $a$ is proper, this lemma gives the expected bound in de Cataldo and Migliorini's result for the cohomological amplitude of $a_* \QQ^H_X[d_X]$ due to $a_! = a_*$. 
        
        \item \label{remark: semismallness upperbound for Q^H_X} In the proof of Lemma \ref{lemma: coh amp of image of loc sys}, we only use the assumption $X$ smooth for the second statement. Thus when $X$ is singular and $a$ is proper, we get
        \[\prescript{p}{}{\H}^k a_* L[d_X] = \prescript{p}{}{\H}^k a_! L[d_X] = 0 \quad \text{ for } \quad k>\delta(a).\]
    \end{enumerate}
\end{remark}

\begin{proof}[Proof of Proposition \ref{proposition: stratified semismall defect bound}]
    By inductively using the distinguished triangle
    \[j_!j^* F \to F \to i_*i^* F \xto{+1}\]
    for the inclusions of strata $(X_s)_{s\in \sS}$ into its closure, we observe that $\Dp^b_{\sS}(X)^{\le 0}$ is generated under extension by objects of the form $j_{s!} L[n]$ where $L$ is a local system (of finite type) on $X_s$, $n\ge \dim {X_s}$ and $j_s: X_s \to X$ is the embedding of stratum $X_s$. Note that $j_{s!} L[n] \in \Dp^b_{\sS}(X)^{\le 0}$ because $\sS$ is Whitney stratification and hence a good stratification (see \cite[Lemma 2.3.22]{Achar-pervers-sheaves}). So it suffices to prove the above claim for $F = j_{s!} L[n]$. Since each $X_s$ is smooth, we have
    \[a_! j_{s!} L[n] = (a|_{X_s})_! L[n] \in \Dp^b_c(Y)^{\le \delta(f|_{X_s})}\]
    by Lemma \ref{lemma: coh amp of image of loc sys}. Because $\delta_s(a) \ge \delta(f|_{X_s})$, the first statement is proven. 

    For the second statement, by using the distinguished triangle
    \[i_*i^! F \to F \to j_*j^* F \xto{+1}\]
    we know that $\Dp^b_{\sS}(X)^{\ge 0}$ is generated under extension by objects of the form $j_{s*} L[m]$ where $m \le \dim(X_s)$. The remaining proof is analogous. 
\end{proof}

\begin{corollary} \label{corollary: cohomological amplitude bound}
    Let $a:X \to A$ be a morphism between projective varieties. Then 
    \begin{enumerate}
        \item $H^i a_* \QQ^H_X[d_X] = 0$ if $i> \delta(a)$ or $i< -\delta_s(a) -\lcdef(X)$. 
        \item $H^i a_* \IC_X\QQ^H = 0$ if $|i| > \delta_s(a)$. 
    \end{enumerate}
\end{corollary}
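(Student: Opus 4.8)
The plan is to reduce both statements to the perverse amplitude of a constructible direct image and then feed that into the two estimates already established. Since $X$ is projective and $A$ is an abelian variety, $a$ is proper, so the underlying $\QQ$-complex of $a_*\QQ^H_X[d_X]$ is $Ra_*(\underline\QQ_X[d_X])$ and that of $a_*\IC_X\QQ^H$ is $Ra_*(\IC_X^\bullet)$; because the functor $\mathrm{rat}$ is faithful and $t$-exact, $H^i$ of a complex of mixed Hodge modules vanishes precisely when the perverse cohomology sheaf $\prescript{p}{}{\H}^i$ of its underlying complex does. So it suffices to pin down the perverse amplitudes of $Ra_*(\underline\QQ_X[d_X])$ and $Ra_*(\IC_X^\bullet)$. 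Write $\sS$ for the minimal Whitney stratification of $X$ throughout.

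For the upper bound in (1), I would apply the first half of Lemma \ref{lemma: coh amp of image of loc sys} to the constant local system $\underline\QQ_X$: its proof uses only proper base change and the definition of $\delta(a)$, not smoothness of $X$, as recorded in Remark \ref{remark: semismallness upperbound for Q^H_X}, and since $a_*=a_!$ this yields $\prescript{p}{}{\H}^i Ra_*(\underline\QQ_X[d_X])=0$ for $i>\delta(a)$. For the lower bound in (1), Theorem \ref{theorem: cohom range of Q_X} identifies $\min\{i : H^i(\QQ^H_X[d_X])\ne 0\}=-\lcdef(X)$, so $\underline\QQ_X[d_X]\in\Dp^b_{\sS}(X)^{\ge -\lcdef(X)}$ (it is trivially $\sS$-constructible). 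Shifting, $\underline\QQ_X[d_X-\lcdef(X)]\in\Dp^b_{\sS}(X)^{\ge 0}$, so Proposition \ref{proposition: stratified semismall defect bound}(2) gives $\prescript{p}{}{\H}^i Ra_*(\underline\QQ_X[d_X-\lcdef(X)])=0$ for $i<-\delta_s(a)$, i.e. $\prescript{p}{}{\H}^i Ra_*(\underline\QQ_X[d_X])=0$ for $i<-\delta_s(a)-\lcdef(X)$.

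For (2), $\IC_X^\bullet$ is a perverse sheaf whose cohomology sheaves are locally constant on the strata of any Whitney stratification of $X$ (local topological triviality along strata, Thom--Mather), so $\IC_X^\bullet\in\Dp^b_{\sS}(X)^{\le 0}\cap\Dp^b_{\sS}(X)^{\ge 0}$; Proposition \ref{proposition: stratified semismall defect bound}(1) (using $a_*=a_!$) and (2) then give $\prescript{p}{}{\H}^i Ra_*(\IC_X^\bullet)=0$ for $i>\delta_s(a)$ and for $i<-\delta_s(a)$ respectively, i.e. for $|i|>\delta_s(a)$. Alternatively, the lower bound follows from the upper one together with the self-duality $\bD_X(\IC_X\QQ^H)\cong\IC_X\QQ^H(d_X)$ and $\bD_A\circ a_*\cong a_!\circ\bD_X$.

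I expect the only genuinely non-formal point to be the sharpness of the upper bound in (1): it is governed by the \emph{unstratified} defect $\delta(a)$, which can be strictly smaller than $\delta_s(a)$, so Proposition \ref{proposition: stratified semismall defect bound}(1) applied to $\underline\QQ_X[d_X]$ is not enough, and one must instead use the fiberwise estimate $(\prescript{p}{}{\H}^i a_!(\underline\QQ_X[d_X]))_y\cong H^{i+d_X}_c(a^{-1}(y),\QQ)$ from the proof of Lemma \ref{lemma: coh amp of image of loc sys}, which is valid for singular $X$. The remaining care is bookkeeping — confirming $\sS$-constructibility of $\IC_X^\bullet$ and of perverse cohomology sheaves of $\sS$-constructible complexes so that Proposition \ref{proposition: stratified semismall defect bound} applies — which is standard since Whitney stratifications are good stratifications.
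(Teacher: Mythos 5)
Your proof is correct and follows essentially the same route as the paper: reduce to perverse amplitude via the faithful functor $\mathrm{rat}$, use Theorem \ref{theorem: cohom range of Q_X} together with Proposition \ref{proposition: stratified semismall defect bound}(2) for the lower bound in (1), and Proposition \ref{proposition: stratified semismall defect bound} for (2). Your observation that the sharper $\delta(a)$ upper bound in (1) needs the fiberwise estimate from Lemma \ref{lemma: coh amp of image of loc sys} (valid without smoothness of $X$) rather than the stratified proposition is exactly the content of Remark \ref{remark: semismallness upperbound for Q^H_X}(2), which the paper's terse proof relies on implicitly.
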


\begin{proof}
    Because both $\underline \CC_X[d_X]$ and $\IC_X^\bullet$ belong to $D^b_{\sS}(X)$, the statement follows from Proposition \ref{proposition: stratified semismall defect bound}, Theorem \ref{theorem: cohom range of Q_X} and \ref{rat functor}. 
\end{proof}

\medskip

\subsection{Proof of Main Theorems} 

\begin{proposition}[Theorem \ref{theorem: easy GV for DB^p and IC^p}]
    Let $a:X\to A$ be a morphism from a projective variety $X$ to an abelian variety $A$. Then
    \begin{enumerate}
        \item $Ra_* \DB^{d_X}_X$ and $Ra_* I\DB_X^{d_X}$ are $GV_{d_{a(X)}-d_X}$. 
        \item $Ra_* \DB^{p}_X$ is $GV_{p-d_X -\delta(a)}$ and $Ra_* I\DB_X^{p}$ is $GV_{p-d_X -\delta_s(a)}$ for any $p\in \NN$.  
    \end{enumerate}
\end{proposition}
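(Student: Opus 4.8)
The plan is to combine the mixed Hodge module interpretations of the (intersection) Du Bois complexes in \eqref{DB as grDR} and \eqref{IC as grDR} with the generic vanishing machinery for objects of $D^bMHM(A)$ developed in Corollary \ref{corollary: GV for D^bMHM}, via the pushforward to the abelian variety. The key identity to exploit is that by Proposition \ref{proposition: grDR and f_* commute}, for $M^\bullet \in D^bMHM(X)$ we have $Ra_* \gr^F_p \DR(M^\bullet) \cong \gr^F_p \DR(a_* M^\bullet)$, and $a_* M^\bullet \in D^bMHM(A)$. So the strategy in every case is: identify the relevant Du Bois-type complex as a shift of $\gr^F_{-p}\DR$ of $\QQ^H_X[d_X]$ or $\IC_X\QQ^H$, push forward, bound the cohomological amplitude of the pushforward as an object of $D^bMHM(A)$ using Corollary \ref{corollary: cohomological amplitude bound}, then invoke Corollary \ref{corollary: GV for D^bMHM} and the shift bookkeeping from Remark \ref{GV and WIT with shift}.

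For statement (1): recall $\DB_X^{d_X} \cong I\DB_X^{d_X} \cong f_*\omega_{\tilde X}$ for a log resolution $f:\tilde X \to \tilde X$, so after composing with $a$ this reduces to the $GV$ property of $Ra_* Rf_* \omega_{\tilde X}$, i.e. the classical situation. Concretely, $\DB_X^{d_X} \cong \gr^F_{-d_X}\DR(\QQ^H_X[d_X])$; pushing to $A$ gives $Ra_*\DB_X^{d_X} \cong \gr^F_{-d_X}\DR(a_* \QQ^H_X[d_X])$, and since $\QQ^H_X[d_X]$ has weight $\le 0$ one knows $H^i(a_*\QQ^H_X[d_X]) = 0$ for $i > 0$. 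But one needs the sharper bound $i > d_{a(X)} - d_X$, which follows exactly as in Hacon's argument recalled after Corollary \ref{corollary: GV for D^bMHM}: using \Kollar-type torsion-freeness (Theorem \ref{theorem: saito torsion-freeness}, or equivalently Corollary \ref{corollary: GV of lowest Hodge filtration of pure HM} applied to $\IC_{\tilde X}\QQ^H = \QQ^H_{\tilde X}[d_{\tilde X}]$ pushed forward), one gets $Ra_* f_*\omega_{\tilde X}$ is $GV_{d_{a(X)}-d_X}$. The same argument applies verbatim to $I\DB_X^{d_X}$ since it is the same object.

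For statement (2): write $\DB_X^p \cong \gr^F_{-p}\DR(\QQ^H_X[d_X])[p - d_X]$, so $Ra_*\DB_X^p \cong \gr^F_{-p}\DR(a_*\QQ^H_X[d_X])[p-d_X]$. By Corollary \ref{corollary: cohomological amplitude bound}(1), $a_*\QQ^H_X[d_X]$ has cohomology concentrated in degrees $[-\delta_s(a)-\lcdef(X), \delta(a)]$; but observe that for the $GV$ bound on $\gr^F_{-p}\DR$ coming from Corollary \ref{corollary: GV for D^bMHM} one only needs the upper bound on the cohomological degrees, namely $b = \delta(a)$ (the lower amplitude $a$ controls the $GV$ index of the \emph{dual}, which is not what we want here). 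Thus $\gr^F_{-p}\DR(a_*\QQ^H_X[d_X])$ is $GV_{-\delta(a)}$, and applying the shift formula in Remark \ref{GV and WIT with shift} with the shift by $p - d_X$ gives that $Ra_*\DB_X^p$ is $GV_{(p-d_X) - \delta(a)} = GV_{p - d_X - \delta(a)}$. Identically, using \eqref{IC as grDR} and Corollary \ref{corollary: cohomological amplitude bound}(2) — which gives $a_*\IC_X\QQ^H$ concentrated in degrees $[-\delta_s(a), \delta_s(a)]$, so $b = \delta_s(a)$ — we obtain $Ra_* I\DB_X^p$ is $GV_{p - d_X - \delta_s(a)}$.

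I do not expect a serious obstacle here: all the hard content (the amplitude bounds via stratified semismallness, the $GV$ theorem for mixed Hodge modules on abelian varieties, the commutation of $\gr^F_p\DR$ with proper pushforward) has been set up in the preceding sections. The one point requiring a little care is the index bookkeeping — making sure the shift by $p - d_X$ is applied correctly in the direction that matches the definition of $GV_{-k}$, and that one uses the upper (not lower) cohomological amplitude bound from Corollary \ref{corollary: cohomological amplitude bound} when feeding into Corollary \ref{corollary: GV for D^bMHM}. The mild subtlety is that $p$ ranges over all of $\NN$ including $p > d_X$, where $\DB_X^p$ may still be nonzero for singular $X$; but since the argument is purely formal in $p$ this causes no trouble, and indeed for $p > d_X$ the resulting $GV$ index is simply larger (a weaker statement).
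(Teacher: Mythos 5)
Your proposal is correct and follows essentially the same route as the paper: part (1) via a log resolution and the classical generic vanishing theorem (equivalently Corollary \ref{corollary: GV of lowest Hodge filtration of pure HM} applied to $\QQ^H_{\tilde X}[d_{\tilde X}]$, resp.\ to $\IC_X\QQ^H$ for the intersection Du Bois complex), and part (2) by feeding only the upper amplitude bounds $\delta(a)$, resp.\ $\delta_s(a)$, from Corollary \ref{corollary: cohomological amplitude bound} into Corollary \ref{corollary: GV for D^bMHM} and shifting by $p-d_X$ as in Remark \ref{GV and WIT with shift}. One aside is wrong but harmless: weight $\le 0$ does not imply $H^i(a_*\QQ^H_X[d_X])=0$ for $i>0$ (the correct vanishing range is $i>\delta(a)$, cf.\ Remark \ref{remark: semismallness upperbound for Q^H_X}), and your argument never actually uses that claim.
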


\begin{proof}
    The statement of $Ra_* \DB^{d_X}_X$ is a straightforward consequence of the classical generic vanishing theorem. Take a log resolution $f:\tilde X \to X$. Since $\tilde X$ is a smooth projective variety, we can apply generic vanishing theorem to the morphism $a \circ f$ to get
    \[Ra_* (Rf_* \omega_{\tilde X}) \cong R(a\circ f)_* \omega_{\tilde X}\]
    is $GV_{d_{a(X)}-d_X}$. By \cite[Proposition 3.3]{Steenbrink-vanishing}, we have 
    \[\DB^{d_X}_X \xto{\cong} Rf_* \omega_{\tilde X}\]
    which implies $Ra_* \DB^{d_X}_X$ is $GV_{d_{a(X)}-d_X}$.

    On the other hand, since $\IC_X\QQ^H$ is a polarizable Hodge module of pure weight $d_X$ with strict support $X$, the second statement is an application of Corollary \ref{corollary: GV of lowest Hodge filtration of pure HM}. 

    The generic Nakano vanishing follows from Corollary \ref{corollary: cohomological amplitude bound}, Corollary \ref{corollary: GV for D^bMHM}, Remark \ref{remark: semismallness upperbound for Q^H_X} and \ref{DB as grDR}, \ref{IC as grDR}.
\end{proof}

\begin{remark}
    It can be shown that $\DB_X^{d_X} \cong I\DB_X^{d_X} \cong f_* \omega_{\tilde X}$; thus two statements in $(1)$ are equivalent. 
\end{remark}

\medskip 

The other generalization $\DD_X(\DB^0_X)$ is more interesting in the sense that it will reflect the singularity of $X$. Let $X = \bigsqcup_{s\in sS} X_s$ be the minimal Whitney stratification of $X$. We define the \textit{stratified relative dimension} of the morphism $a$ as
\begin{equation} \label{definition: stratified relative dimension}
    d_F(a) \coloneqq \max\{\dim X_s - \dim a(X_s): s\in \sS \text{ and } X_s \subset X_{\sing}\}
\end{equation}

\begin{proposition} [Theorem \ref{theorem: hard GV for D(DB^p)}]
    Let $X$ be an irreducible projective variety and $a: X \to A$ a morphism to an abelian variety. Then 
    \begin{enumerate}
        \item $R a_* (\DD_X(\DB^0_X)[-d_X])$ is $GV_{d}$ where $d=\min\{d_{a(X)}-d_X, -d_F(a) -\lcdef(X)\}$. 
        \item $R a_* (\DD_X(\DB_X^{d_X-p})[-d_X])$ is $GV_{p-d_X-\delta_s(a)-\lcdef(X)}$.
    \end{enumerate}
\end{proposition}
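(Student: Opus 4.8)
The plan is to push everything through the graded de Rham functor. Using \eqref{DB as grDR} together with the identity $\DD_X(\gr^F_p\DR(-))\cong \gr^F_{-p}\DR(\bD_X(-))$ of \propositionref{proposition: grDR and f_* commute}, one rewrites
\[\DD_X(\DB_X^{d_X-p})[-d_X]\ \cong\ \gr^F_{d_X-p}\DR\bigl(\bD_X(\QQ^H_X[d_X])\bigr)[p-d_X].\]
Applying $Ra_*$, commuting it past $\gr^F\DR$ (\propositionref{proposition: grDR and f_* commute}), and using $a_*\bD_X\cong\bD_A a_*$ (\propositionref{proposition: base change and commute with dual}, valid since $a$ is proper), the object of interest becomes $\gr^F_{d_X-p}\DR\bigl(\bD_A(a_*\QQ^H_X[d_X])\bigr)[p-d_X]$. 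So both parts reduce to controlling $\gr^F\DR$ of the complex $\bD_A(a_*\QQ^H_X[d_X])$ on $A$.

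For part (2): by \corollaryref{corollary: cohomological amplitude bound}(1), $H^i(a_*\QQ^H_X[d_X])=0$ for $i<-\delta_s(a)-\lcdef(X)$, hence $H^i\bigl(\bD_A(a_*\QQ^H_X[d_X])\bigr)=0$ for $i>\delta_s(a)+\lcdef(X)$. \corollaryref{corollary: GV for D^bMHM} then gives that $\gr^F_{d_X-p}\DR\bigl(\bD_A(a_*\QQ^H_X[d_X])\bigr)$ is $GV_{-\delta_s(a)-\lcdef(X)}$ on $A$, and shifting by $[p-d_X]$ (Remark~\ref{GV and WIT with shift}) converts this into $GV_{p-d_X-\delta_s(a)-\lcdef(X)}$, which is exactly the claim.

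For part (1), i.e.\ $p=d_X$, the estimate above only yields $GV_{-\delta_s(a)-\lcdef(X)}$, which is weaker than the asserted $GV_d$; the point is that $\gr^F_0\DR$ is the \emph{lowest}-Hodge-index graded piece and so admits a torsion-freeness improvement. Concretely, dualize the canonical surjection $\QQ^H_X[d_X]\to\gr^W_{d_X}H^{d_X}\QQ^H_X\cong\IC_X\QQ^H$ to get a triangle $\IC_X\QQ^H(d_X)\to\bD_X(\QQ^H_X[d_X])\to E\xrightarrow{+1}$ in $D^bMHM(X)$. Since that surjection is an isomorphism over $X_{\reg}$, the complex $E$ is supported on $X_{\sing}$; and since $p(\IC_X\QQ^H(d_X))=p(\bD_X(\QQ^H_X[d_X]))=0$ while on $H^0$ the map is the Grothendieck dual of a surjection (hence injective), one checks that $E$ has cohomology concentrated in degrees $[0,\lcdef(X)]$ (the top bound is \theoremref{theorem: cohom range of Q_X}) and that $p(E)\ge 0$. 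Applying $\gr^F_0\DR\circ Ra_*$ to the triangle and using $\gr^F_0\DR(\IC_X\QQ^H(d_X))\cong I\DB_X^{d_X}\cong\DB_X^{d_X}$, whose pushforward is $GV_{d_{a(X)}-d_X}$ by the classical generic vanishing theorem applied to a log resolution (\theoremref{theorem: easy GV for DB^p and IC^p}), we are left to show that the defect term $\gr^F_0\DR(a_*E)$ is $GV_{-d_F(a)-\lcdef(X)}$; granting this, the long exact sequence of \theoremref{theorem: GV and WIT equiv}(3) (the same argument as in \corollaryref{corollary: GV exact sequence}) gives that $Ra_*\bigl(\DD_X(\DB^0_X)[-d_X]\bigr)$ is $GV_{\min(d_{a(X)}-d_X,\,-d_F(a)-\lcdef(X))}=GV_d$.

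The crux, and the step I expect to be the main obstacle, is this bound on $\gr^F_0\DR(a_*E)$: one cannot invoke \propositionref{proposition: stratified semismall defect bound} directly, since that reintroduces $\delta_s$ (which is in general strictly bigger than $d_F$). Instead, write $E=i_*E'$ for $E'\in D^bMHM(W)$ with $W\subseteq X_{\sing}$ a closed union of strata, so $\gr^F_0\DR(a_*E)=\gr^F_0\DR((a|_W)_*E')$; then dévisse $E'$ by its cohomology modules $H^k(E')$ ($k\in[0,\lcdef(X)]$), and each $H^k(E')$ by the strict-support summands of its weight-graded pieces. For a pure summand $N$ with strict support $Z=\overline{X_s}$ one has $p(N)\ge p(E')\ge 0$, so $\gr^F_0\DR$ annihilates it unless $p(N)=0$, and then Saito's torsion-freeness theorem (\theoremref{theorem: saito torsion-freeness}) shows that $\gr^F_0\DR((a|_W)_*N)$ only sees the summands with strict support $a(Z)$, occurring in cohomological degrees $\le\dim Z-\dim a(Z)\le d_F(a)$; each such summand is a single mixed Hodge module and hence $GV$ on $A$, so $\gr^F_0\DR((a|_W)_*N)$ is $GV_{-(\dim Z-\dim a(Z))}$, a fortiori $GV_{-d_F(a)}$. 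Reassembling — one shift by $[-k]$ per cohomological degree $k\le\lcdef(X)$, plus the fact that extensions of $GV_j$-objects are $GV_j$ — yields that $\gr^F_0\DR((a|_W)_*E')$ is $GV_{-d_F(a)-\lcdef(X)}$. What remains is routine: tracking the Tate twists and shifts throughout, verifying the support and amplitude claims for $E$, and confirming $p(H^k(E'))\ge p(E')$ so that the lowest-Hodge-index principle applies at each stage of the dévissage.
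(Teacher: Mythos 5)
Your proof is correct and takes essentially the same route as the paper: reduce everything to $\gr^F\DR$ of $a_*\bD_X(\QQ^H_X[d_X])$, isolate the $\IC_X\QQ^H(d_X)$ contribution (handled by the classical statement of Theorem \ref{theorem: easy GV for DB^p and IC^p}), and control the singular-locus contribution by d\'evissage through cohomological degree, weight, and strict support, using Saito's torsion-freeness, the $d_F(a)$ bound for strata in $X_{\sing}$, the $\lcdef(X)$ amplitude bound, and stability of the $GV$ property under shifts and extensions (Corollary \ref{corollary: GV exact sequence}). The only cosmetic difference is that you split off the intersection complex via the cone of the dualized map $\QQ^H_X[d_X]\to\IC_X\QQ^H$ (the $H^0$-injectivity holding since $\bD_X$ is exact on $MHM(X)$), whereas the paper keeps it inside the weight filtration as $\gr^W_{-d_X}H^0\bD_X(\QQ^H_X[d_X])$; part (2) is identical in substance.
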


\begin{proof} 
    The generic Nakano vanishing follows again from Corollary \ref{corollary: cohomological amplitude bound}, Corollary \ref{corollary: GV for D^bMHM} and \ref{DB as grDR}.
    
    For the first statement, let $M = \bD_X(\QQ^H_X[d_X])$. We know that $p(M) = 0$ and the statement can be written as $R a_* \DD_X(\DB^0_X)[-d_X] \cong R a_* \gr^F_{0}\DR(M) \cong \gr^F_0 \DR(a_* M)$ is $GV_{d}$, where the second isomorphism is due to Proposition \ref{proposition: grDR and f_* commute}. By Theorem \ref{theorem: cohom range of Q_X}, we have $H^i(M)=0$ unless $i$ is in the range between $0$ and $\lcdef(X)$. Because $IC_X\QQ^H[d_X](d_X) = \gr^W_{-d_X} H^0(M)$ and $M$ agree on $X_{\reg}$, we know that $H^i(M)$ (for $i>0$) and $\gr^W_{w} H^0(M)$ (for $w> -d_X$) are supported on the singular locus $X_{\sing}$. Moreover, by the strictness of the Hodge filtration and the compatibility with the weight filtration, we get
    \[p(\gr^W_w H^i M) \ge p(H^i M) \ge p(M) = 0\]
    for any $i, w$. 
    
    Consider the distinguished triangle of weight filtration
    \[W_{w-1} H^i(M) \to W_w H^i(M) \to \gr^W_w H^i(M) \xto{+1}\]
    for any $w$. Since $\gr^W_w H^i(M)$ is a polarizable Hodge modules of pure weight $w$, it decomposes into simple objects by strict support
    \[\gr^W_w H^i(M) = \bigoplus_{Z} M^i_Z.\]
    If either $i>0$ or $w > -d_X$, we have seen that $\gr^W_w H^i(M)$ is supported on $X_{\sing}$. Since $\text{rat}(M)$ restricts to local systems on $X_s$ for any $s\in \sS$, each $Z$ is a union of some strata $X_s$; so $d_{Z} - d_{a(Z)} \le d_F(a)$. Thus Corollary \ref{corollary: GV of lowest Hodge filtration of pure HM} implies that $\gr^F_0 \DR(a_* M^i_Z)$ is either $0$ if $p(M^i_Z) >0$ or $GV_{-d_F(a)}$ if otherwise; hence, we obtain
    \[\gr^F_{0} \DR(a_* \gr^W_w (H^iM))\]
    is $GV_{-d_F(a)}$ if $i>0$ or $w>-d_X$. 
    
    On the other hand, by \ref{theorem: easy GV for DB^p and IC^p}, we know
    \[\gr^F_{0} \DR(a_* \gr^W_{-d_X} (H^0M)) \cong Ra_* I\DB_X^{d_X}\]
    is $GV_{d_{a(X)}-d_X}$. By applying Corollary \ref{corollary: GV exact sequence}, we obtain
    \[\gr^F_0 \DR(a_*(H^0M)) \text{ is } GV_{\min\{d_{a(X)}-d_X, -d_F(a)\}}\]
    and 
    \[\gr^F_0 \DR(a_*(H^iM)) \text{ is } GV_{-d_F(a)} \text{ for } i>0.\]
    Putting these together and applying Corollary \ref{corollary: GV exact sequence} to the distinguished triangles
    \[\gr^F_{0} \DR( a_*(\tau^{< i}M)) \to \gr^F_{0} \DR(a_*(\tau^{<i+1}M)) \to \gr^F_{0} \DR(a_* H^{i}M)[-i] \xto{+1}\]
    for $0\le i \le \lcdef(X)$, the first statement follows. The proof is complete.  
\end{proof}

Let us consider some special cases in which the stratified relative dimension is easy to compute.

\begin{corollary} 
    Let $X$ be an irreducible projective variety with isolated singularities and $a: X \to A$ a morphism to an abelian variety. Then 
    \begin{enumerate}
        \item $R a_* \DD_X(\DB^0_X)[-d_X]$ is $GV_{d}$ where $d=\min\{d_{a(X)}-d_X, -\lcdef(X)\}$. 
        \item $R a_* \DD_X(\DB^{d_X-p}_X)[-d_X]$ is $GV_{p-d_X-\delta(a)-\lcdef(X)}$.
    \end{enumerate}
\end{corollary}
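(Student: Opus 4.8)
The strategy is to read the statement off Theorem~\ref{theorem: hard GV for D(DB^p)} (the Proposition just proved) once the invariants $d_F(a)$ and $\delta_s(a)$ have been evaluated under the isolated–singularities hypothesis. The first step is to pin down the minimal Whitney stratification. When $X_{\sing}$ is a finite set of points, the partition of $X$ into its smooth locus $X_{\reg}$ (a single stratum, since $X$ is irreducible and hence $X_{\reg}$ is connected) together with each singular point is a Whitney stratification, and it is clearly the coarsest one; hence it is the minimal Whitney stratification whose existence is guaranteed by \cite{min-whitney-strata}. In particular every stratum $X_s$ contained in $X_{\sing}$ is a single point.

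Given this, the two invariants are elementary. Since each singular stratum is a point $\{p\}$, we have $\dim X_s-\dim a(X_s)=0$, so $d_F(a)=0$ by \eqref{definition: stratified relative dimension}. For the stratified semismallness defect (Definition~\ref{definition: stratified semismall defect}) the only strata are $X_{\reg}$ and the points $\{p\}$: the restriction $a|_{\{p\}}$ is constant, so $\delta(a|_{\{p\}})=0$ directly from Definition~\ref{definition: semismall-defect}, while $\delta(a|_{X_{\reg}})\le\delta(a)$ because the fibres of $a|_{X_{\reg}}$ are contained in those of $a$ (so $\{y:\dim (a|_{X_{\reg}})^{-1}(y)\ge l\}\subseteq A_l$ for every $l$) and $\dim X_{\reg}=d_X$. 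Moreover $\delta(a)\ge d_X-d_{a(X)}\ge 0$: taking $l=d_X-d_{a(X)}$, every fibre of the dominant morphism $a\colon X\to a(X)$ has dimension at least $l$, so $A_l=a(X)$ has dimension $d_{a(X)}$, and the corresponding term of $\delta(a)$ is $d_X-d_{a(X)}$. Hence $\delta_s(a)\le\delta(a)$.

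Finally, substitute. Plugging $d_F(a)=0$ into Theorem~\ref{theorem: hard GV for D(DB^p)}(1) shows that $Ra_*(\DD_X(\DB^0_X)[-d_X])$ is $GV_d$ with $d=\min\{d_{a(X)}-d_X,\,-d_F(a)-\lcdef(X)\}=\min\{d_{a(X)}-d_X,\,-\lcdef(X)\}$, which is~(1). For~(2), Theorem~\ref{theorem: hard GV for D(DB^p)}(2) gives that $Ra_*(\DD_X(\DB^{d_X-p}_X)[-d_X])$ is $GV_{p-d_X-\delta_s(a)-\lcdef(X)}$; since $\delta_s(a)\le\delta(a)$ and a $GV_{-k}$-object is $GV_{-k'}$ whenever $k'\ge k$, it is in particular $GV_{p-d_X-\delta(a)-\lcdef(X)}$, which is~(2). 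There is no genuine obstacle here: all the Hodge-theoretic content is already packaged in Theorem~\ref{theorem: hard GV for D(DB^p)}, and what remains is the bookkeeping above, the only slightly delicate point being the identification of the minimal Whitney stratification of a variety with isolated singularities, which is standard.
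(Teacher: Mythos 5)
Your proposal is correct and follows essentially the same route as the paper: the paper's proof likewise just observes that for isolated singularities the minimal Whitney stratification is $X_{\reg}\sqcup X_{\sing}$ with $\dim X_{\sing}=0$, so $d_F(a)=0$ and the stratified defect compares with $\delta(a)$, and then substitutes into Theorem~\ref{theorem: hard GV for D(DB^p)}. The only (harmless) difference is that the paper asserts the equality $\delta_s(a)=\delta(a)$, while you prove only the inequality $\delta_s(a)\le\delta(a)$, which, together with the monotonicity of the $GV$ condition, is all that is needed.
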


\begin{proof}
    When $X$ has isolated singularities, the minimal Whitney stratification is simply $X = X_{\reg} \bigsqcup X_{\sing}$ in which $\dim X_{\sing} = 0$. Thus $d_F(a) = 0$ and $\delta_s(a) = \delta(a)$. 
\end{proof}

In many moduli problems, we often encounter the situation of an equisingular family over a smooth base. A concrete formulation of this concept is Whitney equisingular morphism. 

\begin{definition} \cite{Sunggi-Viehweg-hyperbolicity}*{Definition 1.2}
    We say a morphism $a:X \to A$ is \textit{Whitney equisingular} if and only if $a|_{X_s}: X_s \to A$ is smooth for all $s\in \sS$, where $X= \bigsqcup_{s\in \sS} X_s$ is the minimal Whitney stratification of $X$. 
\end{definition}

Because smooth morphism is equidimensional, we know that all fibers of $a|_{X_s}:X_s \to A$ have the same dimension 
\[\dim X_s - \dim A\]
for any $y\in A, s\in \sS$. It is immediate that $d_F(a) = \dim X_{\sing} - \dim A$ and $\delta_s(a) = \delta(a) = \dim X - \dim A$. 

\begin{corollary}
    Let $X$ be an irreducible projective variety and $a: X \to A$ a Whitney equisingular morphism to an abelian variety. Then 
    \begin{enumerate}
        \item $R a_* \DD_X(\DB^0_X)[-d_X]$ is $GV_{d}$ where $d=d_A - \max\{d_X, d_{X_{\sing}}+\lcdef(X)\}$. 
        \item $R a_* \DD_X(\DB^{d_X-p}_X)[-d_X]$ is $GV_{p-2d_X+d_A-\lcdef(X)}$.
    \end{enumerate}
\end{corollary}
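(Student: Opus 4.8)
The plan is to obtain this corollary as a direct specialisation of Theorem~\ref{theorem: hard GV for D(DB^p)} (in the Proposition form proved just above), by evaluating the invariants $d_{a(X)}$, $\delta_s(a)$ and $d_F(a)$ in the Whitney equisingular case; all the geometric input needed is already contained in the paragraph preceding the statement.

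First I would pin down the three invariants. For each stratum $X_s$ the restriction $a|_{X_s}$ is smooth, hence an open map, so its image $a(X_s)$ is a nonempty open subset of the irreducible variety $A$ and therefore has dimension $d_A$; consequently $\dim X_s \ge d_A$ and every fibre of $a|_{X_s}$ over $a(X_s)$ has dimension exactly $\dim X_s - d_A$. Applied to the open dense stratum $X_{\reg}$ (the unique top-dimensional stratum, since $X$ is irreducible) this gives $a(X)=\overline{a(X_{\reg})}=A$, hence $d_{a(X)}=d_A$. Feeding the fibre-dimension computation into Definition~\ref{definition: semismall-defect} yields $\delta(a|_{X_s}) = 2(\dim X_s - d_A) - \dim X_s + d_A = \dim X_s - d_A$; maximising over all strata gives $\delta_s(a)=\delta(a)=d_X-d_A$, and maximising only over the strata contained in $X_{\sing}$ gives, by the defining formula \eqref{definition: stratified relative dimension}, $d_F(a)=d_{X_{\sing}}-d_A$.

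It then remains only to substitute. For part (1), Theorem~\ref{theorem: hard GV for D(DB^p)}(1) says $Ra_*(\DD_X(\DB^0_X)[-d_X])$ is $GV_d$ with
\[d=\min\{d_{a(X)}-d_X,\ -d_F(a)-\lcdef(X)\}=\min\{d_A-d_X,\ d_A-d_{X_{\sing}}-\lcdef(X)\}=d_A-\max\{d_X,\ d_{X_{\sing}}+\lcdef(X)\};\]
for part (2), Theorem~\ref{theorem: hard GV for D(DB^p)}(2) gives the index $p-d_X-\delta_s(a)-\lcdef(X)=p-d_X-(d_X-d_A)-\lcdef(X)=p-2d_X+d_A-\lcdef(X)$. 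These are precisely the two asserted statements.

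I do not expect any genuine obstacle: the corollary is pure bookkeeping on top of Theorem~\ref{theorem: hard GV for D(DB^p)}. The only step requiring a moment's care is the identification of the fibre dimensions of the maps $a|_{X_s}$ — and hence of $d_{a(X)}$, $\delta_s(a)$ and $d_F(a)$ — which rests on the elementary fact that a smooth morphism is open, so that the image of each stratum is automatically full-dimensional in $A$.
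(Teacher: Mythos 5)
Your proposal is correct and matches the paper's own treatment: the paper likewise derives this corollary by observing that Whitney equisingularity forces $\delta_s(a)=\delta(a)=d_X-d_A$ and $d_F(a)=d_{X_{\sing}}-d_A$ (and $d_{a(X)}=d_A$, since the smooth maps on strata have open image), and then substituting into Theorem~\ref{theorem: hard GV for D(DB^p)}. Your extra remark making the openness/equidimensionality of smooth morphisms explicit is exactly the bookkeeping the paper leaves implicit.
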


\medskip

\begin{example}
    Our result explains the counterexample from \cite[Theorem 1]{Hacon-counter_eg_for_GV}. Let us recall the set up of the example. Let $E_1, E_2$ be two cubic curves in $\PP^2$, and $H$ a general hyperplane section of $E_2$. Let $Y = C(E_1\times E_2)$ be the affine cone over $E_1\times E_2$ embedded in $\PP^8$ via the Veronese embedding, and $f:X=Bl_Z(Y) \to Y$ the blow of $Y$ along $Z = C(E_1\times H)$. The morphism $f$ is isomorphic away from the cone point $v$ and $f^{-1}(v) \cong E_1$. Moreover, $X$ has Gorenstein log canonical singularities with one dimensional singular locus $f^{-1}(v)$. Hacon and Kovács modify the morphism $f:X\to Y$ by a sequence of generically finite and \'etale maps to get a generically finite projective morphism $\lambda: T \to A$ to an abelian variety such that $T$ is isomorphic to $X$ over a neighborhood of $f^{-1}(v)$ and smooth outside that neighborhood. Hence $T$ also has Gorenstein log canonical singularities with one dimensional singular locus (see \cite[Proposition 3.13]{Hacon-counter_eg_for_GV}). One of the key features of this example is that $R^1f_* \omega_X \neq 0$, which is preserved under the above modification i.e. $R^1\lambda_* \omega_X \neq 0$. By \cite[Proposition 4.1]{Hacon-counter_eg_for_GV}, if the classical generic vanishing theorem holds for $T$, then $R^i \lambda_* \omega_T = 0$ for all $i > 0$; we get a contradiction. 
    
    In fact, we cannot expect the naive generic vanishing index without taking into account the singularities of $T$. Since $T$ is Gorenstein log canonical variety of dimension $3$, we have 
    \[\omega_T \cong \DD_T(\O_T)[-3] \cong \DD_T(\DB^0_T)[-3]\]
    and by \cite{dao-takagi_lcdef_bound}*{Corollary 2.8}, we have $\lcdef(T) = 0$. Moreover, the morphism $\lambda: T \to A$ maps the singular locus $T_{\sing}$ to a point; so $d_F(\lambda) = 1$. According to Theorem \ref{theorem: hard GV for D(DB^p)}, we get
    \[R\lambda_*\omega_T \text{ is } GV_{-1}.\]
    Thus $-1$ is the correct $GV$ index for Hacon and \Kovacs{}' counterexample. 
\end{example}

\begin{remark}
    \begin{enumerate}
        \item Our results recover the (dimensional) generic vanishing results in the smooth case, in which $\lcdef(X) =0, \delta_s(a) = \delta(a)$ and $d_F(a) = 0$. 
        \item If we look carefully into the proof of Proposition \ref{proposition: stratified semismall defect bound}, the intersection cohomology complex $\IC_X^\bullet$ is generated by $j_{s_!} \IC_X^\bullet|_{X_s}$. Moreover, by \cite[Lemma 3.3.11]{Achar-pervers-sheaves}, the local system $\IC_X^\bullet|_{X_s}$ is in $D_{locf}^b(X_s, \CC)^{\le -\dim X_s - 1}$, which gives us a slightly better bound 
        \[\max\{\delta(a|_{X_{\reg}}),\delta_s(a|_{X_{\sing}}) - 1\}\]
        for the cohomological amplitude of $a_* \IC_X\QQ^H$ in Corollary \ref{corollary: cohomological amplitude bound}.
        \item Because
        \[\DD_X(I\DB_X^{d_X-p})[-d_X] \cong I\DB_X^p\]
        by duality, they have similar $GV$ properties.
        \item The natural morphisms 
        \[\QQ^H_X[d_X] \to \IC_X\QQ^H \cong \bD_X(IC_X\QQ^H)(-d_X) \to \bD_X(\QQ^H_X[d_X])(-d_X)\]
        induce the canonical morphisms 
        \[\psi_p: \DB^p_X \to \DD_X(\DB^{d_X-p}_X)[-d_X]\]
        in $D_{coh}^b(X)$. If $\psi_p$ is an isomorphism, then we have a stronger result that  $\DD_X(\DB^{d_X-p}_X)[-d_X]$ is $GV_{p-d_X-\delta(a)}$. However, this imposes a strong condition on the singularities of $X$. If $\psi_p$ are isomorphism for all $p$, then 
        \[\H^{j-p+1} \DD_X (\DB^{d_X-p}_X) \cong \H^{j+d_X-p+1} \DB^p_X = 0\]
        for all $p$ and $j\ge 0$ by \cite[Theorem 7.29]{peters-steenbrink}. By Proposition \ref{proposition: lcd and DB}, we get $\lcdef(X) = 0$, which makes two $GV$ properties in Theorems \ref{theorem: easy GV for DB^p and IC^p}  and \ref{theorem: hard GV for D(DB^p)} coincide.  
    \end{enumerate}
\end{remark}

\bigskip

\subsection{Non-negativity of Euler characteristic of dual Du Bois complex}

Generic vanishing theorems are especially useful when the variety has maximal Albanese dimension i.e. $\dim X = \dim a(X)$, or equivalently, the Albanese morphism $a: X \to \Alb(X)$ is generically finite over its image. Examples of such varieties include subvarieties of abelian varieties, and their resolutions of singularities. One simple application of the classical generic vanishing theorem for smooth projective variety $X$ is that if $X$ has maximal Albanese dimension, then $\chi(X, \omega_X)\ge 0$. 

In the singular setting, we obtain an analogous statement by means of Theorem \ref{theorem: hard GV for D(DB^p)}. Thanks to \cite{FGA_explained_picard_scheme}*{Remark 9.5.25}, for a normal projective variety $X$, the Picard group $\Pic^0(X)$ and Albanese variety $\Alb(X) \cong \Pic^0(\Pic^0(X))$ exist as irreducible abelian varieties, and there exists the Albanese morphism $a: X\to \Alb(X)$, which satisfies the expected universal properties. Under this assumption, we have:   

\begin{proposition}[Proposition \ref{proposition: nonnegativity_Euler_characteristic}]
    Let $X$ be normal variety with maximal Albanese dimension such that $d_F(a) = 0$ and $\lcdef(X)=0$, then $\DD_X(\DB_X^0)[-d_X]$ is a $GV$-sheaf and $\chi(\DD_X(\DB_X^0)[-d_X])\ge 0$.  
\end{proposition}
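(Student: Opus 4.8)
The plan is to read off the $GV$-index from Theorem~\ref{theorem: hard GV for D(DB^p)}, then check separately that $\DD_X(\DB^0_X)[-d_X]$ is concentrated in degree $0$, and finally run the standard genericity argument for the Euler characteristic. First, since $X$ has maximal Albanese dimension, $d_{a(X)} = d_X$, and the hypotheses $d_F(a) = 0$, $\lcdef(X) = 0$ give
\[
d \;=\; \min\{d_{a(X)} - d_X,\ -d_F(a) - \lcdef(X)\} \;=\; \min\{0,0\} \;=\; 0 .
\]
Hence Theorem~\ref{theorem: hard GV for D(DB^p)}(1) says $Ra_*\bigl(\DD_X(\DB^0_X)[-d_X]\bigr)$ is $GV_0$ on $\Alb(X)$, and by the translation between integral functors on $X$ and on the abelian variety recalled in Section~\ref{section: integral functors} (applied to $a:X\to\Alb(X)$, using that for $X$ normal $\Pic^0(X)$ is an abelian variety and $(a\times\mathrm{id})^*P_{\Alb(X)}\cong P_X$, the \Poincare{} bundle on $X\times\Pic^0(X)$), this is equivalent to saying that $\DD_X(\DB^0_X)[-d_X]$ is a $GV$-object on $X$ with respect to $P_X$.

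Next I claim $\DD_X(\DB^0_X)[-d_X]$ is a coherent sheaf in degree $0$. As in the proof of Theorem~\ref{theorem: hard GV for D(DB^p)}, put $M = \bD_X(\QQ^H_X[d_X])$, so $\DD_X(\DB^0_X)[-d_X]\cong \gr^F_0\DR(M)$ by Proposition~\ref{proposition: grDR and f_* commute} and $p(M)=0$. Since $\lcdef(X)=0$, Theorem~\ref{theorem: cohom range of Q_X} together with $H^i\QQ^H_X = 0$ for $i>d_X$ forces $\QQ^H_X[d_X]$, hence also $M = \bD_X(\QQ^H_X[d_X])$, to be concentrated in cohomological degree $0$, i.e. $M$ is a single mixed Hodge module. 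Fix an embedding $X\subset Y$ with $Y$ smooth and let $\M$ be the underlying $\sD_Y$-module of $M$. As recalled in Section~\ref{section: integral functors}, $p(M)=0$ means $F_0\M$ is the lowest nonzero step of the Hodge filtration, so in $\gr^F_0\DR_Y(M) = [\gr^F_{-d_Y}\M\otimes\textstyle\bigwedge^{d_Y}T_Y \to\cdots\to \gr^F_0\M]$ every term but the last vanishes, and the complex collapses to the coherent sheaf $F_0\M = \gr^F_0\M$ placed in degree $0$. Thus $\DD_X(\DB^0_X)[-d_X]$ is a sheaf, and by the previous paragraph it is a $GV$-sheaf.

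Finally, write $\F = \DD_X(\DB^0_X)[-d_X]$. Because $\F$ is $GV_0$, each locus $V^i(\F) = \{\alpha\in\Pic^0(X): H^i(X,\F\otimes P_\alpha)\neq 0\}$ with $i\ge 1$ has codimension $\ge i\ge 1$ in $\Pic^0(X)$; there are finitely many of them (for $1\le i\le d_X$) and they are closed, so a general $\alpha\in\Pic^0(X)$ lies in none of them, whence $H^i(X,\F\otimes P_\alpha)=0$ for all $i\ge 1$. Since $\F$ is a coherent sheaf and $p_X^*\F\otimes P_X$ is flat over $\Pic^0(X)$, the Euler characteristic $\chi(X,\F\otimes P_\alpha)$ is independent of $\alpha$ on the connected variety $\Pic^0(X)$, so
\[
\chi\bigl(\DD_X(\DB^0_X)[-d_X]\bigr) \;=\; \chi(X,\F\otimes P_\alpha) \;=\; h^0(X,\F\otimes P_\alpha) \;\ge\; 0 .
\]
(If in addition $X$ is Du Bois, then $\O_X\xrightarrow{\sim}\DB^0_X$, so $\DD_X(\DB^0_X)[-d_X]\cong\omega^\bullet_X[-d_X]$; its being a sheaf forces $X$ Cohen--Macaulay and recovers $\chi(\omega_X)\ge 0$.)

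\textbf{Main obstacle.} The only genuinely non-formal step is the sheafiness claim: Theorem~\ref{theorem: hard GV for D(DB^p)} hands us a $GV$-object essentially for free, and the Euler-characteristic step is standard, but the sign $\chi\ge 0$ only makes sense because $\DD_X(\DB^0_X)[-d_X]$ is concentrated in a single degree. This is exactly where $\lcdef(X)=0$ is used (beyond forcing the index to be $0$): it is what collapses $M = \bD_X(\QQ^H_X[d_X])$ to a single mixed Hodge module so that $\gr^F_0\DR(M)=F_0\M$; without it $\gr^F_0\DR(M)$ would in general have cohomology in several degrees.
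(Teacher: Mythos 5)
Your proposal is correct, and its overall skeleton (get the $GV$ index $0$ from Theorem~\ref{theorem: hard GV for D(DB^p)}, show the object is a sheaf, then use genericity plus deformation-invariance of $\chi$) matches the paper; the difference is in how you prove sheafiness. The paper stays on the coherent side: it identifies $\H^i\bigl(\DD_X(\DB^0_X)[-d_X]\bigr)$ with $\sExt^{i-d_X}_{\O_X}(\DB^0_X,\omega_X^\bullet)$, kills the positive-degree cohomology using Proposition~\ref{proposition: lcd and DB} (the characterization of $\lcdef$ via $\sExt$ of Du Bois complexes) and the negative-degree cohomology using the injectivity theorem from \cite{DB_deform}. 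You instead argue on the Hodge-module side: $\lcdef(X)=0$ together with Theorem~\ref{theorem: cohom range of Q_X} and the bound $H^i\QQ^H_X=0$ for $i>d_X$ forces $\QQ^H_X[d_X]$, hence $M=\bD_X(\QQ^H_X[d_X])$, to be a single mixed Hodge module, and then $p(M)=0$ (lowest Hodge filtration index $0$ for a single module) collapses $\gr^F_0\DR(M)\cong F_0\M$ to one coherent sheaf in degree $0$. This is a valid and arguably more streamlined argument: it needs no external coherent-duality input beyond the formalism already set up, and the negative-degree vanishing comes for free from the shape of the de Rham complex; what the paper's route buys is an explicit description of the cohomology sheaves as $\sExt$'s of $\DB^0_X$ and a direct link to the local-cohomology literature, without invoking the identification of $p(M)$ with the lowest filtration index. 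Two small slips worth fixing: the statement that $p(M)$ equals the lowest nonzero Hodge filtration index is recalled in the subsection on Saito's decomposition and torsion-freeness theorems, not in Section~\ref{section: integral functors}; and when you pass from a single module to its dual you should say explicitly that $\bD_X$ is exact on $MHM(X)$ (so concentration in degree $0$ is preserved), which is implicit in your "hence also".
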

\begin{proof}
    By Theorem \ref{theorem: hard GV for D(DB^p)}, we know that $\DD_X(\DB_X^0)[-d_X]$ is a $GV$-object. The assumption $\lcdef(X) = 0$ implies that 
    \[\H^i \DD_X(\DB_X^0)[-d_X] \cong \sExt_{\O_X}^{i-d_X}(\DB_X^0, \omega_X^\bullet) = 0\]
    for all $i > 0$, by Proposition \ref{proposition: lcd and DB}. On the other hand, thanks to \cite{DB_deform}*{Theorem 3.3}, we have 
    \[\H^i \DD_X(\DB_X^0)[-d_X] \cong \sExt_{\O_X}^{i-d_X}(\DB_X^0, \omega_X^\bullet) \hookrightarrow \H^{i-d_X} \omega_X^\bullet = 0\]
    for all $i < 0$. Thus $\DD_X(\DB_X^\bullet)[-d_X]$ is a sheaf. 

    Now, since $c_1(L) = 0$ for any $L\in \Pic^0(X)$, for a general line bundle $L\in \Pic^0(X)$ we have
    \begin{align*}
        \chi(\DD_X(\DB_X^0)[-d_X]) &= \chi(\DD_X(\DB_X^0)[-d_X]\otimes L) \quad (\text{for a general } L\in \Pic^0(X)) \\
        &= \sum_{i\ge 0} (-1)^i \dim H^i(\DD_X(\DB_X^0)[-d_X] \otimes L) \\
        &= \dim H^0(\DD_X(\DB_X^0)[-d_X] \otimes L) \quad (\text{because } \DD_X(\DB_X^0)[-d_X] \text{ is a $GV$-sheaf}) \\
        &\ge 0
    \end{align*}
\end{proof}

Let us analyze the extra assumptions $d_F(a) = 0$ and $\lcdef(X) = 0$:

\begin{example}
    \begin{enumerate}
        \item \textit{Varieties with $d_F(a) = 0$:} According to Definition \ref{definition: stratified relative dimension}, each strata $X_s \subset X_{\sing}$ is generically finite over its image via the Albanese morphism $a|_{X_s}:X_s\to A$; note that $a|_{X_s}$ is not necessarily the Albanese morphism of the strata $X_s$ itself as shown in the counterexmple from \cite{Hacon-counter_eg_for_GV}. Of course, subvarieties of abelian varieties are obvious examples. In addition, the condition $d_F(a) = 0$ holds when $X$ has isolated singularities, or $a:X \to A$ is a Whitney equisingular family of isolated singularities. 

        \item \textit{Varieties with $\lcdef(X) = 0$:} This condition is equivalent to $\lcd(X,Y) = \codim(X, Y)$ for any embedding of $X$ into a smooth variety $Y$. Of course, this holds when $X$ is a local complete intersection. Moreover, this also holds for arbitrary Cohen-Macaulay surfaces and threefolds due to \cite{Ogus-lcd}*{Remark p.338-339} and \cite{dao-takagi_lcdef_bound}*{Corollary 2.8}. We refer the readers to \cite{DB-injectivity}*{Example 4.4} for more examples. 
    \end{enumerate}
\end{example}

\bibliographystyle{alpha}
\bibliography{reference}

\end{document}